\documentclass[12pt, leqno]{amsart}
\usepackage{latexsym}
\usepackage{amssymb,amsfonts,amsmath,mathrsfs}
\usepackage{hyperref}
\addtolength{\textwidth}{3 truecm}
\addtolength{\textheight}{1 truecm}
\setlength{\voffset}{-0.6 truecm}
\setlength{\hoffset}{-1.3 truecm}

\newtheorem{theorem}{Theorem}[section]

\newtheorem{proposition}[theorem]{Proposition}

\newtheorem{lemma}[theorem]{Lemma}

\newtheorem*{theorem1.2}{Theorem 1.2}
\newtheorem*{theorem1.3}{Theorem 1.3}
\newcommand{\tmop}[1]{\ensuremath{\operatorname{#1}}}
\numberwithin{equation}{section}

\begin{document}

\title[On the variance of sums of divisor functions in short intervals]{On the variance of sums of divisor functions \\ in short intervals}

\author[S. Lester]{Stephen Lester}
\address{School of Mathematical Sciences, Tel Aviv University, Tel Aviv 69978, Israel}
\email{slester@post.tau.ac.il}

\thanks{The research leading to these results has received funding
from the European Research Council under the European Union's
Seventh Framework Programme (FP7/2007-2013) / ERC grant agreement
n$^{\text{o}}$ 320755.}

\date{}

\begin{abstract}
Given a positive integer $n$ the $k$-fold divisor function $d_k(n)$ equals the number of ordered
$k$-tuples of positive integers whose product equals $n$. In this article we study the variance
of sums of $d_k(n)$ in short intervals 
and establish asymptotic formulas for the variance of sums of $d_k(n)$ 
in short intervals of certain lengths for $k=3$
and for $k \ge 4$ under the assumption of the Lindel\"of hypothesis.
\end{abstract}

\keywords{}

\maketitle

\section{Introduction and main results}

Let
$k \ge 2$ be an integer and $d_k(n)$ denote the number of ordered $k$-tuples of positive integers whose
product is $n$. Also, write
\[
\Delta_k(x)
=\sum_{n \le x} d_k(n) - \tmop{Res}_{s=1} \Big( \zeta^k(s) \frac{x^s}{s}\Big),
\]
where $\zeta(s)$ is the Riemann zeta-function and the residue on the right-hand side equals $x R(\log x)$
where $R(x)$ is a polynomial of degree $k-1$.

Asymptotic formulas for the mean square of $\Delta_k(x)$, which is the variance
of sums of $d_k(n)$ with $1 \le n \le x$,
have been given by Cram\'er ~\cite{Cramer}, with $k=2$,
and
Tong \cite{tong} for $k =3$ as well as for $k \ge 4$ under
assumption of the Lindel\"of
hypothesis. In this article we study the variance of sums of $d_k(n)$ in short intervals. Short intervals with $x< n < x+h$ and $h=o(x)$ capture the erratic nature of $d_k(n)$ better than long intervals do
and the variance of sums of $d_k(n)$ over short intervals gives stronger information about its behavior.
Additionally, it has long been understood that there is a connection between
the variance of sums of $d_k(n)$ and the $2k$th moment of the Riemann zeta-function. This connection becomes more pronounced when looking at short intervals.

We first note that from the previously stated estimates for the variance
of sums of $d_k(n)$ with $1 \le n \le x$ it follows that
for $k=2,3$, and, for $k \ge 4$ assuming the Lindel\"of hypothesis that
\begin{equation} \label{variance}
\frac1X \int_{X}^{2X} \Big(\Delta_k(x+H)-\Delta_k(x)\Big)^2
\, dx \ll X^{1-\frac1k},
\end{equation}
for $2 \le H \le X$.
When $H$ is small this bound is not very good
and one expects that when $H =o (X^{1-\frac1k})$ this can be improved. 
Using a method of Selberg \cite{Selberg}, 
Milinovich and Turnage-Butterbaugh \cite{MT} have
given an elegant argument, which provides a better bound than \eqref{variance} for $H=o(X^{1-\frac1k})$. Assuming
the Riemann hypothesis and
applying Harper's \cite{Harper} sharp refinement of Soundararajan's \cite{Sound}
bound for the $2k$th moment of the Riemann zeta-function, their argument shows that
\begin{equation} \label{var}
\frac1X \int_{X}^{2X} \Big(\Delta_k\Big(x+\frac{x \cdot X^{-\frac1k}}{L}\Big)-\Delta_k(x)\Big)^2,
\, dx \ll \frac{X^{1-\frac1k}}{ L}(\log  L)^{k^2}
\end{equation}
where $2 \le L \ll X^{1-\frac1k-\varepsilon}$ for some $\varepsilon>0$.
However, this upper bound is most likely not sharp and the true order of magnitude is probably of size $(X^{1-\frac{1}{k}}/L) \cdot (\log L)^{k^2-1}$. 

More precise estimates than \eqref{var} in the case that $k=2$ are given by
Jutila ~\cite{Jutila2} and Ivi\'c ~\cite{Ivic} (see also \cite{coppola} and \cite{LY}). In particular, Ivi\'c \cite{Ivic}
derives
an explicit asymptotic
formula
for the variance of sums of $d_2(n)=d(n)$
in short intervals with  $x < n \le x+h$ and $x^{\varepsilon} \ll h \ll x^{\frac12-\varepsilon}$. 
For  $X^{\varepsilon} \ll L \ll X^{\frac12-\varepsilon}$,
for some $\varepsilon>0$, Ivi\'c proves that
\begin{equation} \label{form ivic}
\frac1X \int_X^{2X}
\Big(\Delta_2\Big(x+\frac{X^{1/2}}{L}\Big)-\Delta_2(x) \Big)^2 \, dx
=\frac{8}{\pi^2} \frac{X^{1/2}}{L} (\log L )^3+O\Big(\frac{X^{1/2}}{L} (\log L)^2 \Big).
\end{equation}
A main tool in Jutila's method for estimating the variance of $d(n)$ in short intervals
is the Voronoi summation formula. This formula expresses $\Delta_2(x)$
in terms of a trigonometric polynomial and similar formulas can be established
for $\Delta_k(x)$ (see for instance \cite{FriedIwaniec}). However, as $k$ becomes
larger these trigonometric polynomials become more complex and even when $k=3$ this method seems to no longer 
work.

In this article we derive asymptotic formulas for the variance
of sums of $d_k(n)$ in short intervals of certain lengths for $k=3$
and
under the assumption of the Lindel\"of hypothesis  for $k\ge 4$. Our main innovation is to (essentially) combine
Jutila's approach with Selberg's method. This enables us to handle the large frequencies
in the trigonometric polynomial approximation to $\Delta_k(x)$ that are a significant obstacle in this problem.
Our formulas only hold for intervals of certain lengths and computing this variance
in even shorter intervals than those in Theorem \ref{main thm} seems difficult and would be very interesting.

Let
\begin{equation} \label{constants}
a_k= \prod_p
\Big( (1-p^{-1})^{k^2} \sum_{j=0}^{\infty}\Big(\frac{\Gamma(k+j)}{\Gamma(k)j!}\Big)^2\frac{1}{p^{j}} \Big)
\quad  \mbox{and} \quad C_k=\frac{2^{2-\frac1k}-1}{2-\frac1k}  \cdot \frac{k^{k^2-1}}{\Gamma(k^2)}\cdot a_k.
\end{equation}
Our first main result gives an estimate for the
variance of sums of $d_3(n)$ in short intervals of certain lengths. 

\begin{theorem} \label{3thm} Suppose that
$2 \le L \ll X^{\frac{1}{12}-\varepsilon}$ for some fixed $\varepsilon>0$.
Then
\[
\frac1X \int_X^{2X}
\Big(\Delta_3\Big(x+\frac{x^{2/3}}{L}\Big)-\Delta_3(x) \Big)^2
\, dx
= C_3   \cdot \frac{X^{2/3}}{L} (\log L)^8+O\Big(\frac{X^{2/3}}{L} (\log L)^7 \Big).
\]
\end{theorem}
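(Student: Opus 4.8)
The plan is to build the proof on a Voronoi-type expansion of $\Delta_3$, treating the resulting trigonometric polynomial by a diagonal argument for the main term and by Selberg's method for the high frequencies. First I would record a truncated formula of the shape
\[
\Delta_3(x) = \frac{x^{1/3}}{\pi\sqrt{3}}\sum_{n\le N}\frac{d_3(n)}{n^{2/3}}\cos\!\big(6\pi(nx)^{1/3}+\varphi\big) + \mathcal{E}_N(x),
\]
with a fixed phase $\varphi$ and a parameter $N$ chosen to be a small power of $X$ with $N\gg L^3$ (cf. \cite{FriedIwaniec}). Forming $\Delta_3(x+H)-\Delta_3(x)$ with $H=x^{2/3}/L$ and using $\cos\alpha-\cos\beta=-2\sin\frac{\alpha+\beta}{2}\sin\frac{\alpha-\beta}{2}$, the decisive point is that this particular length makes the phase difference essentially independent of $x$: since $(n(x+H))^{1/3}-(nx)^{1/3}=\frac{n^{1/3}}{3L}\big(1+O(x^{-1/3}/L)\big)$, the $n$-th term of the difference carries a factor $\sin(\pi n^{1/3}/L)$ that does not depend on $x$ to leading order. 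This decoupling is what makes the variance computable: frequencies $n\ll L^3$ are suppressed because the sine is $\approx \pi n^{1/3}/L$, while the bulk of the mass comes from $n\asymp L^3$ (which, crucially, is still $\ll X^{1/4}$ in our range of $L$).

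Next I would square and integrate over $x\in[X,2X]$. The diagonal terms $n=m$ give the main term: averaging the square of the slowly modulated fast oscillation $\sin\frac{\alpha+\beta}{2}$ against $dx$ produces a factor $\tfrac12$, and the amplitude integrates to $\frac1X\int_X^{2X}x^{2/3}\,dx=\frac{2^{2-1/3}-1}{2-1/3}X^{2/3}$, which is exactly the arithmetic factor sitting in $C_3$. Thus the diagonal equals
\[
\frac{1}{3\pi^2}\cdot\frac{2^{2-1/3}-1}{2-1/3}\,X^{2/3}\sum_{n}\frac{d_3(n)^2}{n^{4/3}}\,2\sin^2\!\Big(\frac{\pi n^{1/3}}{L}\Big).
\]
To evaluate the arithmetic sum I would use partial summation with the asymptotic $\sum_{n\le t}d_3(n)^2\sim \frac{a_3}{\Gamma(9)}\,t(\log t)^8$ (the Dirichlet series $\sum_n d_3(n)^2 n^{-s}$ has a pole of order $9$ at $s=1$ with leading constant the local product $a_3$ of \eqref{constants}, since $\Gamma(k+j)/(\Gamma(k)j!)=d_k(p^j)$), then substitute $w=t^{1/3}/L$ and apply $\int_0^\infty \frac{\sin^2\pi w}{w^2}\,dw=\frac{\pi^2}{2}$. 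Retaining only the top power of $\log$, the sum is $\sim \frac{a_3}{\Gamma(9)}\cdot\frac{3^9\pi^2}{2}\cdot\frac{(\log L)^8}{L}$, and assembling all constants collapses the diagonal to exactly $C_3\,\frac{X^{2/3}}{L}(\log L)^8$; the next term of the $\log$-expansion is of size $\frac{X^{2/3}}{L}(\log L)^7$ and is absorbed into the error.

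It remains to control everything that is not the clean diagonal, and this is where the work lies. The off-diagonal terms $n\ne m$ I would bound by integrating the product of two cosines against $dx$: the combined phase $6\pi\big((nx)^{1/3}\pm(mx)^{1/3}\big)$ has derivative $\gg (n^{1/3}-m^{1/3})X^{-2/3}$ (resp. $\gg(n^{1/3}+m^{1/3})X^{-2/3}$), so a first-derivative (stationary phase) estimate makes their total contribution lower order. The genuinely hard part — and the reason a pure Jutila-type treatment breaks down already at $k=3$, as the $d_3$ Voronoi formula becomes unwieldy — is the high-frequency range together with the tail $\mathcal{E}_N$. Here I would use Selberg's method as in \cite{MT}, matching the high-frequency piece of the trigonometric polynomial to short segments of $\zeta(\tfrac12+it)^3$ and bounding its mean square by a sixth-moment-type average of the zeta-function over a controlled range; this average is available unconditionally when $k=3$, whereas for $k\ge4$ it is precisely the point at which the Lindel\"of hypothesis must be invoked. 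The main obstacle is making these three error contributions — off-diagonal loss, the Voronoi truncation error, and the high-frequency mean square — all smaller than $\frac{X^{2/3}}{L}(\log L)^7$ simultaneously, and the balancing of these constraints is exactly what forces the restriction $2\le L\ll X^{1/12-\varepsilon}$.
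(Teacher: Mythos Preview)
Your proposal is correct and follows essentially the same approach as the paper: a Voronoi-type trigonometric polynomial approximation for $\Delta_3$, the diagonal computation via the $\cos\alpha-\cos\beta$ identity and the $\int_0^\infty\frac{\sin^2\pi w}{w^2}\,dw$ evaluation (yielding exactly $C_3$), first-derivative bounds for the off-diagonal, and a Selberg/Plancherel argument reducing the tail to a sixth-moment bound for $\zeta$, which is precisely what forces $L\ll X^{1/12-\varepsilon}$. The only organizational difference is that the paper first isolates the mean-square approximation $\Delta_3\approx P_3$ as a standalone proposition (bounding the contour-integral remainder $I_3$ directly via Heath-Brown's sixth-moment estimate on the line $\sigma=\tfrac12-\tfrac1{12}$), and then treats the polynomial variance separately, whereas you describe the high-frequency and tail contributions together; the mathematics is the same.
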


We also examine the variance of sums of $d_k(n)$ in short intervals
for $k \ge 3$ under the assumption of the Lindel\"of hypothesis.

\begin{theorem} \label{main thm}
Assume the Lindel\"of hypothesis. Suppose
$2 \le L \ll X^{\frac{1}{k(k-1)}-\varepsilon}$ for some fixed $\varepsilon>0$.
Then for each integer $k \ge 3$ we have 
\begin{equation} \notag
\begin{split}
\frac1X \int_X^{2X}
\Big(\Delta_k\Big(x+\frac{x^{1-\frac1k}}{L}\Big)-\Delta_k(x) \Big)^2
\, dx
=& C_k \cdot \frac{X^{1-\frac1k}}{L}(\log L)^{k^2-1}\\
 &+O\Big(\frac{X^{1-\frac1k}}{L}(\log L)^{k^2-2}\Big).
\end{split}
\end{equation}
\end{theorem}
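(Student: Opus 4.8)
The plan is to combine a truncated Voronoi-type summation formula for $\Delta_k$ with mean-value estimates for the Riemann zeta-function supplied by Selberg's method. The starting point is a representation of the shape
\[
\Delta_k(x) = \frac{x^{\frac{k-1}{2k}}}{\pi\sqrt{k}}\sum_{n \le N}\frac{d_k(n)}{n^{\frac{k+1}{2k}}}\cos\Big(2\pi k (nx)^{1/k} + c_k\Big) + E_k(x,N),
\]
valid for a suitable phase constant $c_k$ and truncation parameter $N$ (cf. \cite{FriedIwaniec}), with a remainder $E_k(x,N)$ whose mean square over $[X,2X]$ can be controlled. Writing $G(x) = \Delta_k(x+H)-\Delta_k(x)$ with $H = x^{1-1/k}/L$ and inserting this formula, I would split the frequencies at a threshold $N_0 \asymp L^k$: the range $n \le N_0$ carries the main term and is treated by Jutila's diagonal method, while the range $n > N_0$ together with $E_k$ forms the genuinely difficult tail, which is handled through its connection to the $2k$-th moment of $\zeta$ on the critical line. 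The cross terms between the two ranges are then absorbed by Cauchy--Schwarz once both pieces are estimated in mean square.

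For the main term, the key observation is that the phase difference satisfies $(n(x+H))^{1/k}-(nx)^{1/k} = n^{1/k}/(kL)$ to leading order, \emph{independently of} $x$, so that the average over $x \in [X,2X]$ of the squared cosine difference equals $2\sin^2(\pi n^{1/k}/L)$ up to lower-order terms. Expanding $G(x)^2$, the off-diagonal terms $m \ne n$ oscillate in $x$ and, after first-derivative (van der Corput) estimates for the resulting exponential integrals, contribute only to the error provided $N_0^{k-1}\ll X^{1-\varepsilon}$; this is exactly the source of the hypothesis $L \ll X^{1/(k(k-1))-\varepsilon}$, since $N_0 = L^k$ gives $N_0^{k-1}=L^{k(k-1)}$. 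The diagonal terms, after the elementary integral $\int_X^{2X}x^{(k-1)/k}\,dx = \frac{2^{2-1/k}-1}{2-1/k}X^{2-1/k}$, reduce the problem to the arithmetic sum
\[
\sum_{n \le N_0}\frac{d_k(n)^2}{n^{\frac{k+1}{k}}}\cdot 2\sin^2\!\Big(\frac{\pi n^{1/k}}{L}\Big),
\]
which I would evaluate by partial summation from the classical asymptotic $\sum_{n\le Y}d_k(n)^2 \sim \frac{a_k}{\Gamma(k^2)}Y(\log Y)^{k^2-1}$. After the substitution $t = u^k$ and the identity $\int_0^\infty 2\sin^2(\pi u/L)\,u^{-2}\,du = \pi^2/L$, this sum equals $\frac{a_k k^{k^2}}{\Gamma(k^2)}\cdot\frac{\pi^2(\log L)^{k^2-1}}{L}+O\big(L^{-1}(\log L)^{k^2-2}\big)$; combining with the squared prefactor $(\pi\sqrt{k})^{-2}=1/(k\pi^2)$ reproduces precisely the constant $C_k$ and the claimed main term.

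The hard part is controlling the high-frequency tail $n > N_0$ and the remainder $E_k(x,N)$ in mean square. Here Jutila's Voronoi approach alone is insufficient, since the trigonometric polynomials become intractable as $k$ grows; instead I would invoke Selberg's method, representing the tail via a contour integral of $\zeta(s)^k\big((x+H)^s-x^s\big)/s$ shifted to the critical line. Integrating the square over $x \in [X,2X]$ and using orthogonality collapses this to a mean value of $|\zeta(\tfrac12+it)|^{2k}$ against a kernel concentrated at heights $t \asymp X/H = LX^{1/k}$. The Lindel\"of hypothesis, in the form $\int_T^{2T}|\zeta(\tfrac12+it)|^{2k}\,dt \ll T^{1+\varepsilon}$, then shows this contribution is $O\big(X^{1-1/k}L^{-1}(\log L)^{k^2-2}\big)$ and is absorbed into the error term. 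This is precisely the step that is unavailable unconditionally for $k \ge 4$ and forces the assumption of Lindel\"of (for $k=3$ the corresponding sixth-moment bound is known, which is why Theorem \ref{3thm} is unconditional). The most delicate estimates occur in the transition region $n \asymp L^k$, where $\sin^2(\pi n^{1/k}/L)$ is neither negligible nor equidistributed, and in matching the off-diagonal and tail errors so that both remain of size $O\big(X^{1-1/k}L^{-1}(\log L)^{k^2-2}\big)$ uniformly across the admissible range of $L$.
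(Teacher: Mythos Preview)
Your overall architecture matches the paper's: approximate $\Delta_k$ by the truncated Voronoi polynomial, compute the mean square of the polynomial difference via the diagonal (your main-term computation and the evaluation of $\sum d_k(n)^2 n^{-1-1/k}\sin^2(\pi n^{1/k}/L)$ are exactly Lemmas~\ref{p est} and~\ref{partial summation}), and control the tail through a contour integral bounded by Plancherel and the $2k$th moment of $\zeta$ (this is Proposition~\ref{main prop}, via Lemmas~\ref{contour} and~\ref{plancherel}).

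There is, however, a genuine slip in how you set the truncation and what you claim for the tail. Under Lindel\"of one has $\int_T^{2T}|\zeta(\tfrac12+it)|^{2k}\,dt \ll T^{1+\varepsilon}$, and this $X^{\varepsilon}$ is unavoidable; the Plancherel step (Lemma~\ref{plancherel}) therefore yields a tail of size $\ll X^{1-(1+\theta)/k+\varepsilon}$ when the polynomial is cut at $X^{\theta}$, not a logarithmic bound. If you choose $N_0\asymp L^k$, i.e.\ $X^{\theta}=L^k$, this gives $X^{1-1/k}L^{-1}\cdot X^{\varepsilon}$, which \emph{dominates} the main term $X^{1-1/k}L^{-1}(\log L)^{k^2-1}$. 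So your stated bound $O(X^{1-1/k}L^{-1}(\log L)^{k^2-2})$ for the tail does not follow from Lindel\"of, and with $N_0=L^k$ the argument as written fails.

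The paper avoids this by \emph{not} coupling the truncation to $L$: it fixes $\theta=\tfrac{1}{k-1}-\varepsilon$, the largest value permitted both by the stationary-phase remainder in Lemma~\ref{contour} and by the off-diagonal estimate in Lemma~\ref{MVT}. Then the tail error $X^{1-(1+\theta)/k+\varepsilon}$ has a genuine \emph{power} saving over $X^{1-1/k}/L$ precisely when $L\ll X^{\theta/k-\varepsilon}=X^{1/(k(k-1))-\varepsilon}$, and this is where the restriction on $L$ actually originates. In other words, the constraint does not come solely from the off-diagonal condition $N_0^{k-1}\ll X$ as you wrote, but from the interplay between that ceiling on $\theta$ and the requirement that the $X^{\varepsilon}$-inflated tail still beat the main term. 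Your argument is easily repaired by truncating at $X^{1/(k-1)-\varepsilon}$ (or, equivalently, at $L^kX^{\eta}$ for any small fixed $\eta>0$), after which everything you wrote goes through.
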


\noindent
\textbf{Remark.}
There is a slight difference between $C_2$ and the leading
coefficient in \eqref{form ivic}.
This arises because the lengths of our intervals depend on the variable $x$.  To clarify this discrepancy note that
\[
C_k= \bigg(\int_1^2 x^{1-\frac1k} \, dx \bigg)  \cdot \frac{k^{k^2-1}}{\Gamma(k^2)}\cdot a_k.
\]

Ivi\'c \cite{Ivic} gives
a more precise formula for the left-hand side of \eqref{form ivic} that includes lower order terms and an error term with a power savings in $X$.  We can also prove more
precise formulas than those stated in Theorems \ref{3thm} and \ref{main thm}.  In particular, assuming the Lindel\"of hypothesis, we 
can show for each $k \ge 3$ and $L=X^{\delta}$ with $\varepsilon < \delta < \frac{1}{k(k-1)}-\varepsilon$, 
for some $\varepsilon>0$, that
\[
\frac1X \int_X^{2X}
\Big(\Delta_k\Big(x+\frac{x^{1-\frac1k}}{L}\Big)-\Delta_k(x) \Big)^2
\, dx
= \sum_{j=0}^{k^2-1} c_j \cdot \frac{X^{1-\frac1k}}{L}(\log L)^{j} +O\Big(\frac{X^{\vartheta(\delta,k)}}{L}\Big)
\] 
where $\vartheta(\delta,k)\le 1-\frac1k-\eta$
for some fixed $\eta=\eta(\varepsilon)>0$. We have
not computed the coefficients $c_j$ for
$0\le j \le k^2-2$.

In concurrent work, Keating, Rodgers, Roditty-Gershon, and Rudnick (see \cite{KRRR})
have established strong results on an analog of this problem
in the setting of function fields over a finite field $\mathbb F_q$ in the case $q \rightarrow \infty$
and degree of the polynomials, $n$, is fixed. 
In this setting they succeed in unconditionally computing the variance of sums of divisor functions in very short intervals. From their results we expect that the order of the left-hand side of \eqref{var}
to be of size $(X^{1-\frac1k}/{L}) \cdot (\log L)^{k^2-1}$ for $X^{\varepsilon} \ll L \ll X^{1-\frac1k-\varepsilon}$. 
Additionally, their analysis suggests that leading order constant should have a very elaborate and interesting behavior.
For instance, a transition appears there when the lengths of the intervals become smaller than those
considered in Theorem \ref{main thm}. This is consistent with our analysis, since when the intervals become shorter than those considered in Theorem \ref{main thm} our method fails for several reasons. Not only does the polynomial approximation 
to $\Delta_k(x+h)-\Delta_k(x)$
become too long to handle, but
it also seems to no longer effectively approximate $\Delta_k(x+h)-\Delta_k(x)$ in mean square. These breaking points coincide precisely at this transition.

\section{Main propositions}
Our first main step approximates
$\Delta_k(x)$ on average by short trigonometric polynomials. This may be compared to what can be proved for pointwise approximations (see \cite{FriedIwaniec}).
\begin{proposition} \label{3prop}
Let $0<\theta\le \frac{1}{2}$ and
\[
P_3(x;\theta)=\frac{x^{\frac{1}{3}}}{\pi \sqrt{3}}
\sum_{n \le  X^{\theta}}
\frac{d_3(n)}{n^{\frac23}}\cos\big(6\pi \sqrt[3]{nx}\big).
\]
Then for any $\varepsilon>0$ we have
\begin{equation} \label{poly approx}
\frac{1}{X} \int_{X}^{2X}
\Big(\Delta_3(x)-P_3(x;\theta) \Big)^2 \, dx
\ll X^{\frac23-\frac{\theta}{6}+\varepsilon}.
\end{equation}
\end{proposition}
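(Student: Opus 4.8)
The plan is to produce a truncated Voronoi-type approximation to $\Delta_3(x)$ of exactly the length appearing in $P_3(x;\theta)$ and then to estimate the mean square of the remainder. Since the clean Bessel-function Voronoi formula that drives Jutila's argument for $d_2$ is not available in a usable form for $d_3$ (this is precisely the obstruction noted in the introduction), I would work directly from the functional equation. Starting from Perron's formula $\sum_{n\le x} d_3(n)=\frac{1}{2\pi i}\int_{(c)}\zeta^3(s)\frac{x^s}{s}\,ds$ and subtracting the residue at $s=1$, I would shift the contour to a line with $\Re s<0$, substitute $\zeta^3(s)=\chi(s)^3\zeta(1-s)^3$ and expand $\zeta(1-s)^3=\sum_m d_3(m)m^{s-1}$. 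Evaluating the integrals $\frac{1}{2\pi i}\int\chi(s)^3 m^{s-1}\frac{x^s}{s}\,ds$ through the Stirling asymptotics of $\chi(s)^3$ and stationary phase yields the main terms $\frac{x^{1/3}}{\pi\sqrt3}d_3(m)m^{-2/3}\cos(6\pi\sqrt[3]{mx})$, so that after truncating at $m\le X^\theta$ one obtains
\[
\Delta_3(x)=P_3(x;\theta)+\mathcal E(x;\theta),
\]
where $\mathcal E$ collects the tail $m>X^\theta$ together with the stationary-phase remainders. The proposition is the assertion that $\frac1X\int_X^{2X}\mathcal E(x;\theta)^2\,dx\ll X^{2/3-\theta/6+\varepsilon}$.

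The tail portion of $\mathcal E$ is the more routine of the two. Squaring the oscillating sum over $m>X^\theta$ and integrating, I would separate diagonal ($m=m'$) from off-diagonal ($m\ne m'$) contributions. The diagonal produces $\frac1X\int_X^{2X}x^{2/3}\,dx$ times roughly $\tfrac12\sum_{m>X^\theta}d_3(m)^2m^{-4/3}$; using the classical asymptotic $\sum_{m\le y}d_3(m)^2\sim c\,y(\log y)^8$ together with partial summation gives $\sum_{m>Y}d_3(m)^2m^{-4/3}\ll Y^{-1/3}(\log Y)^8$, so the diagonal contributes $\ll X^{2/3-\theta/3}(\log X)^8$, comfortably inside the target since $\theta/3>\theta/6$. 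For the off-diagonal I would either apply repeated integration by parts to $\int_X^{2X}x^{2/3}\cos(6\pi\sqrt[3]{mx})\cos(6\pi\sqrt[3]{m'x})\,dx$, exploiting that the phase $6\pi(m^{1/3}\pm m'^{1/3})x^{1/3}$ has derivative bounded away from zero for $m\ne m'$, or equivalently invoke a large-sieve inequality for the generalized Dirichlet polynomial $\sum a_m e(m^{1/3}\xi)$ after the substitution $\xi=3x^{1/3}$, whose frequencies $m^{1/3}$ are spaced by $\gg m^{-2/3}$; both routes keep the off-diagonal within the claimed bound.

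The main obstacle is the stationary-phase remainder, i.e. controlling the shifted contour integral of $\chi(s)^3\zeta(1-s)^3\,x^s/s$ and the error incurred in replacing each $m$-integral by its leading $\cos$-term. Here the cubic case is genuinely harder than the quadratic one: the factor $\chi(s)^3$ carries three gamma functions and a degenerate higher-order stationary point, and the portion of the integral away from the stationary phase must be estimated using moment and convexity bounds for $\zeta$ on the shifted line rather than a single clean Hankel transform. I expect this step to dictate the final exponent, producing the weaker $X^{2/3-\theta/6+\varepsilon}$ in place of the $X^{2/3-\theta/3}$ suggested by the diagonal alone. I would finally optimize the position of the contour and the truncation so that the stationary-phase remainder and the tail balance, and collect the two estimates to obtain \eqref{poly approx}.
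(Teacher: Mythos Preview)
Your decomposition contains a genuine gap at the ``tail'' step. If you shift the Perron integral (truncated at height $T\asymp X$) all the way to $\Re s<0$, expand $\zeta(1-s)^3$, and apply stationary phase, the resulting oscillating sum runs over $m\ll T^{3}/x\asymp X^{2}$, so your tail is $\{X^{\theta}<m\le N\}$ with $N\asymp X^{2}$. After integration by parts the off-diagonal contribution is
\[
\ll X^{1/3}\sum_{X^{\theta}<m'<m\le N}\frac{d_3(m)d_3(m')}{(m')^{2/3}(m-m')}\ll X^{1/3}N^{1/3+\varepsilon},
\]
which for $N\asymp X^{2}$ is $X^{1+\varepsilon}$ --- far larger than the target $X^{2/3-\theta/6+\varepsilon}$. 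The large-sieve variant you mention gives the same bound, since the frequency spacing $m^{-2/3}$ cannot offset a sum of length $X^{2}$. Nor can you rescue this by ``moment and convexity bounds on the shifted line'': on $\Re s<0$ the factor $\zeta(1-s)^3$ is bounded, so the size of $\chi(s)^3$ dominates and no moment input helps there.

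The paper avoids expanding the tail as a sum over $m$ at all. One first shifts Perron only to the line $\Re s=\tfrac12-\delta$ with $\delta=\tfrac{1}{12}$, then splits the vertical integral at the \emph{low} height $Y=2\pi X^{(1+\theta)/3}$. The piece $|t|\le Y$ is pushed further to $\Re s=-\epsilon$ and stationary phase yields exactly the terms $m\le X^{\theta}$, with pointwise error $O(X^{(1+\theta)/6+\varepsilon})$. The piece $Y\le|t|\le X$ is left as the integral
\[
I_3(x;\theta,\tfrac{1}{12})=\tmop{Re}\frac{1}{\pi i}\int_{\frac{5}{12}+iY}^{\frac{5}{12}+iX}\zeta^{3}(s)\,\frac{x^{s}}{s}\,ds,
\]
and its $L^{2}$ norm over $X\le x\le 2X$ is bounded via Plancherel by essentially the sixth moment of $\zeta$ on $\Re s=\tfrac{7}{12}$. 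Heath-Brown's estimate $\int_{0}^{T}|\zeta(\tfrac{7}{12}+it)|^{6}\,dt\ll T^{1+\varepsilon}$ then gives
\[
\frac{1}{X}\int_{X}^{2X}|I_3(x;\theta,\tfrac{1}{12})|^{2}\,dx\ll X^{\,1+\frac{\theta}{6}-\frac{1+\theta}{3}+\varepsilon}=X^{\frac{2}{3}-\frac{\theta}{6}+\varepsilon},
\]
which is precisely the exponent in the proposition. The choice $\delta=\tfrac{1}{12}$ (together with Weyl's $\lambda=\tfrac16$ for the horizontal contours) is what produces the $-\theta/6$; without naming this input your outline cannot arrive at that exponent.
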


Assuming the Lindel\"of hypothesis we are able to prove a stronger result for the ternary divisor function
as well gives analogous result for $d_k(n)$ for each $k \ge 4$.

\begin{proposition} \label{main prop}
Assume the Lindel\"of hypothesis.  Let
$k \ge 3$ be an integer, $0<\theta \le \frac{1}{k-1}$, and
\[
P_k(x;\theta)=\frac{x^{\frac{1}{2}-\frac{1}{2k}}}{\pi \sqrt{k}}
\sum_{n \le  X^{\theta}}
\frac{d_k(n)}{n^{\frac12+\frac{1}{2k}}}\cos\Big(2\pi k \sqrt[k]{nx}+\frac{k-3}{4}\pi\Big).
\]
Then for any $\varepsilon>0$ we have
\begin{equation} \label{varbdbest}
\frac{1}{X} \int_{X}^{2X}
\Big(\Delta_k(x)-P_k(x;\theta) \Big)^2 \, dx
\ll X^{1-\frac{1}{k} \cdot (1+\theta)+\varepsilon}.
\end{equation}
\end{proposition}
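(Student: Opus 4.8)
The plan is to establish a Voronoi-type approximation to $\Delta_k(x)$ via contour integration and then control every resulting error term in mean square. Writing $\kappa=1+1/\log X$, I would start from the truncated Perron formula
\[
\sum_{n\le x} d_k(n)=\frac{1}{2\pi i}\int_{\kappa-iT}^{\kappa+iT}\zeta(s)^k\frac{x^s}{s}\,ds+\mathcal E_P(x),
\]
with the height chosen to be $T\asymp X^{(1+\theta)/k}$, which is precisely the value that makes the truncation $n\le X^{\theta}$ correspond to the saddle point below. Shifting the contour to $\mathrm{Re}(s)=-\delta$ with $\delta=1/\log X$ picks up the pole of order $k$ at $s=1$, whose residue is exactly the subtracted main term $xR(\log x)$, together with the bounded contribution $\zeta(0)^k$ from the pole at $s=0$. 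This yields
\[
\Delta_k(x)=\frac{1}{2\pi i}\int_{-\delta-iT}^{-\delta+iT}\zeta(s)^k\frac{x^s}{s}\,ds+\mathcal E_P(x)+\mathcal E_H(x)+O(1),
\]
where $\mathcal E_H(x)$ collects the two horizontal segments at height $\pm T$.

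On the shifted line $\mathrm{Re}(s)=-\delta<0$ I would apply the functional equation $\zeta(s)^k=\chi(s)^k\zeta(1-s)^k$ and expand $\zeta(1-s)^k=\sum_n d_k(n)\,n^{s-1}$, which converges absolutely there; interchanging sum and integral reduces matters to evaluating $\sum_n \frac{d_k(n)}{n}\cdot\frac{1}{2\pi i}\int_{-\delta-iT}^{-\delta+iT}\chi(s)^k(nx)^s\,\frac{ds}{s}$. Using Stirling in the form $\chi(s)^k\approx(|t|/2\pi)^{k(1/2-s)}e^{\pm ik(t+\pi/4)}$ and applying the method of stationary phase, the saddle sits at $t_0=2\pi(nx)^{1/k}$; a routine computation of the amplitude and phase at $t_0$ (which I will not grind through here) reproduces the summand $\frac{x^{1/2-1/(2k)}}{\pi\sqrt k}\,d_k(n)\,n^{-1/2-1/(2k)}\cos\bigl(2\pi k(nx)^{1/k}+\tfrac{k-3}{4}\pi\bigr)$. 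The terms with $n\le X^{\theta}$ have $t_0\le T$ and assemble exactly into $P_k(x;\theta)$, while those with $n>X^{\theta}$ have their saddle beyond the truncation and feed only into the error. One then observes that the target size $X^{1-(1+\theta)/k+\varepsilon}$ is exactly the diagonal mean square of this tail, since $x^{1-1/k}\sum_{n>X^{\theta}}d_k(n)^2 n^{-1-1/k}\asymp X^{1-1/k}(X^{\theta})^{-1/k}(\log X)^{k^2-1}$, so it suffices to prove that the off-diagonal contribution and the genuine error terms are no larger.

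The hard part will be bounding $\mathcal E_P$ and especially $\mathcal E_H$ in mean square, along with the off-diagonal terms. The Lindel\"of hypothesis enters in two ways: as the pointwise bound $\zeta(\sigma+it)^k\ll|t|^{\varepsilon}$ for $\sigma\ge\tfrac12$ (used on the horizontal segments and, through the functional equation, for $\sigma<\tfrac12$), and as the moment estimate $\int_0^T|\zeta(\tfrac12+it)|^{2k}\,dt\ll T^{1+\varepsilon}$ (used for the mean square of $\mathcal E_P$ and of the stationary-phase remainders). The central obstacle is that $\mathcal E_H(x)$ is of pointwise size $\asymp x/T=X^{1-(1+\theta)/k}$, whose square integrated over $[X,2X]$ is far too large, and since the factor $x^{\pm iT}$ contributes no oscillation to the dominant part of $|\mathcal E_H|^2$, merely averaging over $x$ yields no saving. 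The resolution, which is the main innovation, is to follow Jutila and average over the truncation height $T$ across a dyadic range: the horizontal contributions at distinct heights decorrelate and produce the cancellation needed to push $\frac1X\int_X^{2X}\mathcal E_H(x)^2\,dx$ below $X^{1-(1+\theta)/k+\varepsilon}$. Finally, the off-diagonal terms $\sum_{n\ne m}$ are controlled by exploiting the separation of the frequencies $(nx)^{1/k}$: a first-derivative (stationary-phase) estimate for the oscillatory integrals that arise after integrating in $x$ over $[X,2X]$ shows these terms are of lower order, completing the bound.
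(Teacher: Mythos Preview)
Your setup through stationary phase is reasonable, but the resolution you propose for the horizontal error $\mathcal E_H$ is not what the paper does, and it is where your plan has a genuine gap. You truncate Perron at the low height $T\asymp X^{(1+\theta)/k}$ and then shift all the way to $\mathrm{Re}(s)=-\delta$; this manufactures the obstacle you describe, and the proposed fix of averaging over $T$ is neither in the paper nor obviously effective. Varying $T$ changes the implicit cutoff in the stationary-phase sum (the saddle condition is $nx\le (T/2\pi)^k$), so the object you would be averaging is not $\Delta_k(x)-P_k(x;\theta)$ for a fixed $\theta$ but a family of approximations with moving truncation; you would also have to show that $\mathcal E_P$ behaves well under this averaging, and you give no mechanism for that. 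Your remark that the $2k$th moment is ``used for the mean square of $\mathcal E_P$'' is not substantiated.

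The paper avoids the obstacle altogether by a two-stage contour shift. First truncate Perron at the \emph{high} height $X$, where the Perron remainder is $O(X^\varepsilon)$, and shift only to $\mathrm{Re}(s)=\tfrac12-\delta$; under Lindel\"of (take $\delta=\lambda=0$) the horizontal segments at height $\pm X$ are negligible. Now split the vertical integral at $Y=2\pi X^{(1+\theta)/k}$. The piece $I_k(x;\theta,\delta)$ with $Y\le|t|\le X$ stays on $\mathrm{Re}(s)=\tfrac12$ and is bounded in mean square by a Plancherel argument: squaring, integrating in $x$ against a smooth weight, and using rapid decay away from the diagonal reduces the problem to $\int_Y^X|\zeta(\tfrac12+it)|^{2k}\,t^{-2}\,dt$, which under the Lindel\"of moment bound is $\ll Y^{-1+\varepsilon}$, giving exactly $X^{1-(1+\theta)/k+\varepsilon}$. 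Only the short piece $|t|\le Y$ is shifted further left to $\mathrm{Re}(s)=-\epsilon$, where the horizontal segments at height $\pm Y$ contribute $O(Y^{k/2-1}X^\varepsilon)=O(X^{(1/2-1/k)(1+\theta)+\varepsilon})$, acceptable precisely for $\theta\le\frac{1}{k-1}$; stationary phase on this piece then produces the sum over $nx\le X^{1+\theta}$, and a short mean-value computation removes the $x$-dependence in the cutoff. Thus the $2k$th moment enters through $I_k$, not through $\mathcal E_P$ or $\mathcal E_H$, and no averaging over the height is needed.
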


For $k=2$ and $0<\theta \le 1$ the inequality \eqref{varbdbest} is known to hold unconditionally (see equation (12.4.4) of Titchmarsh \cite{Titchmarsh}).
The strength of the upper bound is significant and
better bounds in \eqref{poly approx}
correspond to 
being able to compute the variance of sums of $d_k(n)$ in shorter intervals. In Proposition \ref{main prop}
we obtain a better estimate in the case $k=3$ 
than \eqref{poly approx}. This allows us to compute the variance of sums of $d_3(n)$
in even shorter intervals, assuming the Lindel\"of hypothesis. 
Heath-Brown ~\cite{Heath-Brown}
has also obtained an estimate for the left-hand
side of  \eqref{poly approx} by estimating
the mean values of $\Delta_3(x)P_3(x;\theta)$ and $P_3(x;\theta)^2$ 
and then applying Tong's formula for the mean square
of $\Delta_3(x)$. Our upper bound strengthens the estimate
given by Heath-Brown. 
Additionally, our proof of \eqref{poly approx} is significantly different. Particularly,
it does not use Tong's results. In fact, our argument gives a new proof of Tong's formulas.

As another application of the above propositions we will establish asymptotic formulas
for the variance of sums of $d_k(n)$ in intervals with $x<n\le x+h$ 
with $x^{1-\frac1k+\varepsilon} \ll h \ll x^{1-\varepsilon}$. In this regime $\Delta_k(x+h)$ and
$\Delta_k(x)$ interact as if they are uncorrelated.
\begin{theorem} \label{notsoshort}
Suppose that $X^{1-\frac1k+\varepsilon} \ll H \ll X^{1-\varepsilon}$ for some $\varepsilon >0$. Then 
for $k =2,3$ and for $k \ge 4$ under the assumption of the Lindel\"of hypothesis
we have as $X \rightarrow \infty$
\[
\frac1X \int_{X}^{2X} \Big(\Delta_k(x+H)-\Delta_k(x)\Big)^2 \, dx \sim B_k \cdot X^{1-\frac1k}
\]
where
\[
B_k= \frac{2^{2-\frac1k}-1}{2-\frac1k} \cdot \frac{1}{\pi^2 k}\sum_{n = 1}^{\infty} \frac{d_k(n)^2}{n^{1+\frac1k}}.
\]
\end{theorem}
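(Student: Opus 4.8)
The statement is an asymptotic formula for the variance of $\Delta_k(x+H)-\Delta_k(x)$ when $H$ is in the range $X^{1-1/k+\varepsilon} \ll H \ll X^{1-\varepsilon}$. The key structural feature is that this $H$ is *larger* than the critical length $X^{1-1/k}$, so the two increments should decorrelate, and the variance should just be (twice) the "diagonal" contribution. Let me think about how to exploit the propositions that are already available.

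Let me set up the problem. We want to understand $\int_X^{2X}(\Delta_k(x+H)-\Delta_k(x))^2\,dx$. The natural idea is to substitute $H = X^{1-1/k}/L$ where now $L = X^{1-1/k}/H$, so that $H \gg X^{1-1/k+\varepsilon}$ means $L \ll X^{-\varepsilon}$... wait, that's not right. Let me reconsider. If $H \gg X^{1-1/k+\varepsilon}$ then $L = X^{1-1/k}/H \ll X^{-\varepsilon} \cdot X^{1-1/k}/X^{1-1/k}$... Actually $L = X^{1-1/k}/H \ll X^{1-1/k}/X^{1-1/k+\varepsilon} = X^{-\varepsilon} < 1$. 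So in this regime $L$ is small, which is the opposite of Theorems \ref{3thm}/\ref{main thm}. This is precisely why the behavior is different — $L<1$ means the interval is longer than critical.

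let me think about what approach to actually use. The cleanest route is:

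**The plan.** I would prove Theorem \ref{notsoshort} by replacing $\Delta_k(x+H)-\Delta_k(x)$ with the trigonometric polynomial difference $P_k(x+H;\theta)-P_k(x;\theta)$ supplied by Propositions \ref{3prop} and \ref{main prop}, computing the mean square of that difference, and controlling the replacement error. Concretely, I would first fix a suitable exponent $\theta$ (close to its maximal allowed value $\tfrac1{k-1}$ for $k\ge 3$, and handle $k=2$ via the unconditional bound quoted after Proposition \ref{main prop}) so that the mean-square approximation error in \eqref{varbdbest} (resp.\ \eqref{poly approx}) is $o(X^{1-1/k})$. By Cauchy--Schwarz, writing $f=\Delta_k(x+H)-\Delta_k(x)$ and $g=P_k(x+H;\theta)-P_k(x;\theta)$, the difference of the two mean squares is bounded by $\ll (\|f-g\|_2\cdot\|f+g\|_2)/X$, and since each of $\|\Delta_k-P_k\|_2^2/X$ is small while $\|P_k\|_2^2/X = O(X^{1-1/k})$, this error is $o(X^{1-1/k})$, so it suffices to evaluate $\tfrac1X\int_X^{2X} g^2\,dx$ asymptotically.

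**The main computation.** The heart of the matter is expanding the mean square of $g = P_k(x+H;\theta)-P_k(x;\theta)$. Writing $P_k$ as a cosine sum $\frac{x^{1/2-1/(2k)}}{\pi\sqrt k}\sum_{n\le X^\theta} \frac{d_k(n)}{n^{1/2+1/(2k)}}\cos(2\pi k\sqrt[k]{nx}+\tfrac{k-3}4\pi)$, the mean square produces diagonal terms ($m=n$) and off-diagonal terms ($m\ne n$). I expect the diagonal to give the main term: for each $n$, the product of the two cosines, after the standard product-to-sum identity, yields a term $\cos(2\pi k\sqrt[k]{n}(\sqrt[k]{x+H}-\sqrt[k]{x}))$ of slowly varying phase plus a rapidly oscillating term that integrates to something negligible. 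Here the crucial point is that $H\gg X^{1-1/k+\varepsilon}$ forces the phase increment $2\pi k\sqrt[k]{n}(\sqrt[k]{x+H}-\sqrt[k]{x}) \approx 2\pi \sqrt[k]{n}\, H x^{1/k-1}$ to be large (of size $\gg X^\varepsilon$) for all $n\ge1$ in the range, so the \emph{cross} terms between $P_k(x+H)$ and $P_k(x)$ also oscillate and contribute nothing to the main term; this is exactly the decorrelation phenomenon. What survives is the sum of the squares of $P_k(x+H)$ and of $P_k(x)$ separately, i.e.\ twice the diagonal, giving
\[
\frac1X\int_X^{2X} g^2\,dx \sim \frac{2}{\pi^2 k}\Big(\frac1X\int_X^{2X} x^{1-\frac1k}\,dx\Big)\sum_{n\le X^\theta}\frac{d_k(n)^2}{n^{1+\frac1k}},
\]
and extending the now-convergent sum $\sum d_k(n)^2/n^{1+1/k}$ to infinity (the tail beyond $X^\theta$ is negligible since the sum converges) together with $\frac1X\int_X^{2X} x^{1-1/k}\,dx = \frac{2^{2-1/k}-1}{2-1/k}X^{1-1/k}$ yields precisely $B_k\cdot X^{1-1/k}$, matching the stated constant.

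**The main obstacle.** The delicate step will be controlling the off-diagonal and oscillatory contributions uniformly. For the off-diagonal terms $m\ne n$, I would apply a first-derivative (stationary phase / integration-by-parts) estimate to each integral $\int_X^{2X} x^{1-1/k}\cos(2\pi k(\sqrt[k]{mx}\pm\sqrt[k]{nx}))\,dx$; the phase derivative is $\asymp (\sqrt[k]m\pm\sqrt[k]n)X^{1/k-1}$, so the saving is governed by $|\sqrt[k]m\pm\sqrt[k]n|$, and summing the resulting bounds against $d_k(m)d_k(n)/(mn)^{1/2+1/(2k)}$ over $m\ne n\le X^\theta$ must be shown to be $o(X^{1-1/k})$. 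The genuinely tricky part is that the minus sign cases can have small frequency gaps $|\sqrt[k]m-\sqrt[k]n|$ when $m,n$ are close, so the integration-by-parts saving degrades; I would need to separate the ranges where $|\sqrt[k]m-\sqrt[k]n|$ is small versus large, using a divisor-sum bound like $\sum_{n\le N} d_k(n)^2 \ll N(\log N)^{k^2-1}$ to control the number of near-diagonal pairs. Likewise, the oscillatory half of the diagonal term (the term with phase $\sqrt[k]{x+H}+\sqrt[k]{x}$) and the cross terms require the lower bound $H\gg X^{1-1/k+\varepsilon}$ to guarantee enough oscillation; ensuring all these error estimates are genuinely $o(X^{1-1/k})$ and not merely $O(X^{1-1/k})$, uniformly across the stated $H$-range, is where the real work lies.
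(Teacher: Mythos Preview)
Your overall strategy matches the paper's: replace $\Delta_k$ by $P_k(\cdot;\theta)$ via Propositions~\ref{3prop}/\ref{main prop}, show the covariance $\tfrac1X\int P_k(x+H;\theta)P_k(x;\theta)\,dx$ is negligible so that the mean square of the difference is asymptotically the sum of the two individual mean squares, and close with Cauchy--Schwarz. That part is fine.

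The genuine gap is your choice of $\theta$. You propose $\theta$ close to the maximal value $\tfrac{1}{k-1}$, whereas the paper takes $\theta$ \emph{small} (of order $\varepsilon$; explicitly $\theta=12\varepsilon$ for $k=3$ and $\theta=2k\varepsilon$ for $k\ge4$). This is not a matter of taste: with large $\theta$ the covariance bound fails in the upper part of the $H$-range. For the off-diagonal cross terms $m\ne n$ in $\int P_k(x+H)P_k(x)$ the phase is $\phi(x)=k\bigl(m^{1/k}(x+H)^{1/k}-n^{1/k}x^{1/k}\bigr)$, and $\phi'(x)=0$ exactly when $x=H/\bigl((m/n)^{1/(k-1)}-1\bigr)$. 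For $m=n+1$ with $n$ large this stationary point lands in $[X,2X]$ once $n\asymp X/H$, i.e.\ once $H\gtrsim X^{1-\theta}$; at that point your first-derivative/integration-by-parts saving collapses. With $\theta\approx\tfrac{1}{k-1}$ you would only cover $H=o(X^{(k-2)/(k-1)})$ (for $k=3$, merely $H=o(X^{1/2})$), far short of the stated $H\ll X^{1-\varepsilon}$. The paper's fix is simply to take $\theta$ small, so that $H\ll X^{1-\varepsilon}=o(X^{1-\theta})$ is automatic and no stationary points occur; this costs nothing on the approximation side because both propositions already give error $o(X^{1-1/k})$ for \emph{any} fixed $\theta>0$.

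A minor slip: your displayed leading constant $\tfrac{2}{\pi^2 k}$ should be $\tfrac{1}{\pi^2 k}$; you have dropped the factor $\tfrac12$ coming from averaging $\cos^2$.
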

The leading order constant here is essentially twice the one that appears in Tong's formula \cite{tong} for the mean square of $\Delta_k(x)$. 
As we will see, this reflects
that the covariance between $\Delta_k(x+h)$ and $\Delta_k(x)$ tends to zero as $X \rightarrow \infty$ in this regime.

Before proving Propositions \ref{poly approx} and \ref{main prop} we
first require several preliminary lemmas. The first of these lemmas cites a stationary phase estimate. Here and throughout  $\chi(s)=\pi^{s-\frac12} \Gamma(\frac{1-s}{2})/\Gamma(\frac{s}{2})$ is the functional equation factor for $\zeta(s)$, that is $\zeta(s)=\chi(s)\zeta(1-s)$.
\begin{lemma}\label{phase}
Suppose $k \ge 2$. For sufficiently large $Y<x$ we have
\begin{equation} \notag
\begin{split}
\frac{1}{2\pi i} \int_{-\epsilon-iY}^{-\epsilon+iY}
  \chi^k(s)\zeta^k(1-s) x^{s}\frac{ds}{s}
=&\frac{x^{\frac{1}{2}-\frac{1}{2k}}}{\pi \sqrt{k}}\sum_{n \le N}
\frac{d_k(n)}{n^{\frac{1}{2}+\frac{1}{2k}}}\cos\Big(2\pi k \sqrt[k]{nx}+\frac{k-3}{4}\pi\Big)\\
&+O\Big(Y^{\frac{k}{2}-1} x^{\epsilon}+x^{1+\epsilon}  Y^{-\frac12-\frac{k}{2}}\Big)
\end{split}
\end{equation}
where $N= (\frac{Y}{2\pi})^k x^{-1}$.
\end{lemma}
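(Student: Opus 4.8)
The plan is to expand $\zeta^k(1-s)$ into its Dirichlet series and reduce the integral to a sum of single stationary-phase integrals. On the line $\operatorname{Re}(s)=-\epsilon$ we have $\operatorname{Re}(1-s)=1+\epsilon>1$, so $\zeta^k(1-s)=\sum_{n\ge 1} d_k(n)\,n^{s-1}$ converges absolutely and uniformly; since the path has finite length I may interchange summation and integration and write the left-hand side as $\sum_{n\ge1}\frac{d_k(n)}{n}\,I_n$, where
\[
I_n=\frac{1}{2\pi i}\int_{-\epsilon-iY}^{-\epsilon+iY}\chi^k(s)\,(nx)^s\,\frac{ds}{s}.
\]
Because $\chi$ is real on the real axis we have $\overline{\chi(s)}=\chi(\overline{s})$, so the integrand at $-t$ is the complex conjugate of the integrand at $t$; hence $I_n=\frac1\pi\operatorname{Re}\int_0^Y \chi^k(-\epsilon+it)\,(nx)^{-\epsilon+it}\,\frac{dt}{-\epsilon+it}$, and it suffices to treat $t>0$.

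Next I would insert the Stirling-type asymptotic $\chi(s)=\big(\tfrac{t}{2\pi}\big)^{\frac12-s}e^{i(t+\pi/4)}\big(1+O(1/t)\big)$, valid for large $t$ in a fixed vertical strip. Writing $m=nx$, this presents the integrand in the form $A(t)e^{i\phi(t)}$ with amplitude of size $\big(\tfrac{t}{2\pi}\big)^{k(\frac12+\epsilon)}m^{-\epsilon}/t$ and phase
\[
\phi(t)=-kt\log\!\Big(\tfrac{t}{2\pi}\Big)+kt+\tfrac{k\pi}{4}+t\log m .
\]
Then $\phi'(t)=\log m-k\log(t/2\pi)$ vanishes exactly at $t_0=2\pi m^{1/k}=2\pi(nx)^{1/k}$, with $\phi''(t)=-k/t<0$. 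The decisive point is that $t_0\le Y$ precisely when $n\le N=(Y/2\pi)^k/x$, so only these $n$ produce a stationary point inside the contour.

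For $n\le N$ I would apply the cited stationary-phase estimate. A direct computation gives $\phi(t_0)=2\pi k(nx)^{1/k}+\tfrac{k\pi}{4}$, while the amplitude at $t_0$ together with the Gaussian factor $\sqrt{2\pi/|\phi''(t_0)|}=\sqrt{2\pi t_0/k}$ collapses (the powers of $\epsilon$ cancelling) to $\frac{1}{\sqrt k}(nx)^{\frac12-\frac1{2k}}$. Collecting the phase factors — $e^{i\phi(t_0)}$, the $e^{-i\pi/4}$ from the second-derivative test (since $\phi''<0$), and the $e^{-i\pi/2}$ from $1/(it_0)$ — the constant combines to $\tfrac{k\pi}4-\tfrac{3\pi}4=\tfrac{k-3}4\pi$, so that $\tfrac1\pi\operatorname{Re}(\cdots)$ contributes exactly $\frac{x^{1/2-1/(2k)}}{\pi\sqrt k}\,\frac{d_k(n)}{n^{1/2+1/(2k)}}\cos\!\big(2\pi k\sqrt[k]{nx}+\tfrac{k-3}4\pi\big)$. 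Summing over $n\le N$ yields the stated main term.

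The error analysis is where I expect the real work to lie. There are three sources: (i) the remainder in the stationary-phase formula and the $O(1/t)$ from Stirling for $n\le N$; (ii) the non-stationary terms $n>N$, where $\phi'(t)=\log\!\big(nx/(t/2\pi)^k\big)$ stays bounded away from zero on $[0,Y]$, which I would treat by integrating by parts repeatedly — each step gains a factor $1/\phi'(Y)=1/\log(n/N)$, and one must integrate by parts more than $k$ times so that the resulting $n$-sum $\sum_{n\ge 2N} d_k(n)\,n^{-1}(\log(n/N))^{-j}$ converges; (iii) the boundary contributions at $t=Y$ and the transition window $n\asymp N$, where $t_0$ sits near the endpoint and neither the stationary nor the non-stationary bound applies cleanly. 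The endpoint amplitude $\big(\tfrac{Y}{2\pi}\big)^{k/2}/Y\asymp Y^{k/2-1}$, summed against $d_k(n)/n$ after enough integrations by parts, produces the term $Y^{\frac k2-1}x^{\varepsilon}$, while each transition term is of the order of a single term of the main sum at the cutoff, namely $\ll x^{1+\varepsilon}Y^{-\frac12-\frac k2}$ (using $Y\asymp(Nx)^{1/k}$). The delicate part is bounding the narrow transition range around $n=N$ and summing (i)–(ii) uniformly in $n$ without losing more than $x^{\varepsilon}$.
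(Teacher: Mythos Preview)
The paper does not give an independent proof of this lemma: its entire proof consists of a citation to Friedlander--Iwaniec, pages 497--499. Your sketch is precisely the stationary-phase argument carried out there --- expand $\zeta^k(1-s)$ absolutely, insert the Stirling asymptotic for $\chi$, locate the stationary point $t_0=2\pi(nx)^{1/k}$, and observe that $t_0\le Y$ exactly when $n\le N$. Your main-term computation (the amplitude $\frac{1}{\sqrt k}(nx)^{1/2-1/(2k)}$ and the phase constant $\tfrac{k-3}{4}\pi$) is correct, and your identification of the two error scales $Y^{k/2-1}x^{\varepsilon}$ (endpoint/boundary contributions) and $x^{1+\varepsilon}Y^{-1/2-k/2}$ (transition terms at $n\asymp N$) matches the cited result. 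So you have reconstructed the referenced proof rather than taken a different route; the only thing the paper adds beyond what you wrote is the pointer to where the ``real work'' in the error analysis is actually done in detail.
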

\begin{proof}
This estimate is due to Friedlander and Iwaniec.
See pages 497-499 of ~\cite{FriedIwaniec}.
\end{proof}

Let $\lambda \ge 0$ be a number such that 
$|\zeta(\tfrac12+it)| \ll t^{\lambda+\varepsilon}$ for every $ \varepsilon >0$. It is well-known that by the Phragmen-Lindel\"of principle (or otherwise)
one has
for $\frac12 \le \sigma \le 1$ and every $\varepsilon>0$ that
\begin{equation} \label{convex}
|\zeta(\sigma+it)| \ll t^{2\lambda(1-\sigma)+\varepsilon}.
\end{equation}

\begin{lemma} \label{contour} Let $k \ge 2$ and
 $0 \le \delta \le \frac{1}{k}$. Also, let
\[
I_{k}( x;\theta, \delta)= \tmop{Re} \frac{1}{ \pi i}\int_{\frac12-\delta+iY}^{\frac12-\delta+i X}
\zeta^k(s) x^{s} \frac{ds}{s},
\]
where $Y=2 \pi  X^{\frac{1+\theta}{k}}$ and $0 \le \theta  \le \tfrac12 \cdot( k-1)$.
Then uniformly for $X \le x \le 2X$ we have
\begin{equation} \notag
\begin{split}
\Delta_k(x)
=&
\frac{x^{\frac{1}{2}-\frac{1}{2k}}}{\pi \sqrt{k}}
\sum_{nx \le  X^{1+\theta}}
\frac{d_k(n)}{n^{\frac12+\frac{1}{2k}}}\cos\Big(2\pi k \sqrt[k]{nx}+\frac{k-3}{4}\pi\Big) \\
&+I_{k}(x;\theta, \delta)+E_k(\theta, \delta, X),
\end{split}
\end{equation}
where, for any $\varepsilon>0$,
\[
E_k(\theta, \delta, X)
\ll X^{\varepsilon} \Big(X^{(\frac{1}{2}-\frac1k) \cdot(1+  \theta)}
+X^{\frac12-\frac{(1+\theta+k\theta)}{2k}}+
X^{\frac12-\delta} (X^{\frac{(1+\theta)}{k}}+X)^{k(\lambda+\delta-2\lambda \delta)-1}\Big).
\]
\end{lemma}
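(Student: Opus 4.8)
The plan is to start from a truncated Perron formula for $\sum_{n\le x}d_k(n)$, shift the line of integration leftward to $\Re s=\tfrac12-\delta$ so that the high-frequency segment $Y\le|\Im s|\le X$ becomes exactly $I_k(x;\theta,\delta)$, and then convert the remaining low-height piece into the trigonometric polynomial by a further shift to $\Re s=-\epsilon$ and an application of Lemma \ref{phase}. All estimates must be uniform for $X\le x\le 2X$. Concretely, with $c=1+1/\log x$ and truncation height $X$, Perron's formula gives
\[
\sum_{n\le x}d_k(n)=\frac{1}{2\pi i}\int_{c-iX}^{c+iX}\zeta^k(s)\frac{x^s}{s}\,ds+O(X^{\varepsilon}),
\]
where the error is handled in the standard way (terms with $n$ close to $x$, together with the $\ll x^{1+\varepsilon}/X$ contribution). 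Subtracting $\tmop{Res}_{s=1}\big(\zeta^k(s)x^s/s\big)=xR(\log x)$ turns the left-hand side into $\Delta_k(x)$.

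Next I would move the contour from $\Re s=c$ to $\Re s=\tfrac12-\delta$. Since $\tfrac12-\delta\ge\tfrac12-\tfrac1k\ge 0$, the only pole crossed is the order-$k$ pole at $s=1$, whose residue is precisely $xR(\log x)$ and cancels the subtracted main term. What remains is the vertical integral on $\Re s=\tfrac12-\delta$ from $-iX$ to $iX$ plus two horizontal integrals at height $\pm X$. Using the conjugate symmetry $\overline{\zeta^k(s)x^s/s}=\zeta^k(\bar s)x^{\bar s}/\bar s$ for real $x>0$, the vertical integral equals $\Re\tfrac{1}{\pi i}\int_{\tfrac12-\delta}^{\tfrac12-\delta+iX}\zeta^k(s)\tfrac{x^s}{s}\,ds$, and splitting this at height $Y$ separates $I_k(x;\theta,\delta)$ (the part with $Y\le\Im s\le X$) from the low piece $\mathcal L:=\Re\tfrac{1}{\pi i}\int_{\tfrac12-\delta}^{\tfrac12-\delta+iY}\zeta^k(s)\tfrac{x^s}{s}\,ds$.

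To produce the trigonometric polynomial I would shift $\mathcal L$ (equivalently the symmetric integral over $|\Im s|\le Y$) from $\Re s=\tfrac12-\delta$ to $\Re s=-\epsilon$, crossing the simple pole at $s=0$ and collecting the bounded residue $\zeta(0)^k=(-\tfrac12)^k=O(1)$ together with horizontal integrals at height $\pm Y$. Because $\zeta^k(s)=\chi^k(s)\zeta^k(1-s)$, the resulting integral on $\Re s=-\epsilon$ is exactly the left-hand side of Lemma \ref{phase}. With $Y=2\pi X^{(1+\theta)/k}$ one has $N=(Y/2\pi)^k x^{-1}=X^{1+\theta}/x$, so the truncation $n\le N$ becomes $nx\le X^{1+\theta}$ and Lemma \ref{phase} yields precisely the stated polynomial with error $Y^{k/2-1}x^{\varepsilon}+x^{1+\varepsilon}Y^{-1/2-k/2}$; inserting $Y=2\pi X^{(1+\theta)/k}$ converts these into the first two terms of $E_k(\theta,\delta,X)$.

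The remaining and genuinely delicate step is the estimation of the horizontal integrals, on $\Re s\in[\tfrac12-\delta,c]$ at height $X$ and on $\Re s\in[-\epsilon,\tfrac12-\delta]$ at height $Y$. For $\sigma\ge\tfrac12$ I would bound $\zeta$ by the convexity estimate \eqref{convex}, and for $\sigma<\tfrac12$ by combining it with the functional equation and $|\chi(\sigma+it)|\asymp(t/2\pi)^{1/2-\sigma}$, which gives $|\zeta(\tfrac12-\delta+it)|^k\ll t^{k(\lambda+\delta-2\lambda\delta)+\varepsilon}$. The factor $1/|s|$ contributes $1/T$ at height $T$, and the relevant exponent is convex in $\sigma$ and maximised at the left endpoint $\sigma=\tfrac12-\delta$, so the two horizontal integrals are $\ll X^{\varepsilon}x^{1/2-\delta}\big(Y^{k(\lambda+\delta-2\lambda\delta)-1}+X^{k(\lambda+\delta-2\lambda\delta)-1}\big)$, i.e.\ the third term of $E_k$ once $Y\asymp X^{(1+\theta)/k}$ and $x\asymp X$. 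This is the main obstacle: the contour $\Re s=\tfrac12-\delta$ lies to the \emph{left} of the critical line, where no direct subconvexity is available, so one must route through the functional equation and the convexity bound, and then check that for the admissible ranges of $\theta,\delta$ the exponent $k(\lambda+\delta-2\lambda\delta)-1$, weighted by $x^{1/2-\delta}$, stays small enough to be useful later. By contrast, the application of Lemma \ref{phase} and the bookkeeping of the residues at $s=1$ and $s=0$ are essentially mechanical once the contour is arranged so that $n\le N$ matches $nx\le X^{1+\theta}$.
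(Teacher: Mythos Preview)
Your proposal is correct and follows essentially the same route as the paper's proof: truncated Perron, shift to $\Re s=\tfrac12-\delta$ picking up the pole at $s=1$, split the vertical integral at height $Y$ to isolate $I_k$, shift the low-height piece to $\Re s=-\epsilon$ (picking up the bounded residue at $s=0$), and apply Lemma~\ref{phase} with $N=X^{1+\theta}/x$.

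One small imprecision worth noting: in your horizontal-contour estimate you assert that the integrand is ``maximised at the left endpoint $\sigma=\tfrac12-\delta$'' for both horizontal segments. For the height-$X$ segment on $[\tfrac12-\delta,c]$ the integrand is in fact bounded at both endpoints by a convex interpolation, and the paper records this as $X^{\varepsilon}(1+X^{\frac12-\delta}X^{k(\lambda+\delta-2\lambda\delta)-1})$. For the height-$Y$ segment on $[-\epsilon,\tfrac12-\delta]$ the \emph{left} endpoint is $\sigma=-\epsilon$, not $\tfrac12-\delta$, and there the bound is $\ll Y^{k/2-1}X^{\varepsilon}$; this term is absent from your stated horizontal estimate but is harmless, since it already appears in the error of Lemma~\ref{phase} and hence in the first term of $E_k(\theta,\delta,X)$. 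With that minor bookkeeping fixed, your argument matches the paper's exactly.
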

\noindent
\textbf{Remark.} In the bound for $E_k$ the term  $X^{(\frac12-\frac1k)(1+\theta)}$ is significant and is smaller
than $X^{\frac12-\frac{(1+\theta)}{2k}}$ only
for $\theta < \frac{1}{k-1}$. This accounts for the limitation in the range of $\theta$ in Proposition \eqref{main prop}.
\begin{proof}
Applying Perron's formula we get that
\[
\sum_{n \le x} d_k(n)=\int_{1+\epsilon-iX}^{1+\epsilon+iX}
\zeta^k(s) x^s \frac{ds}{s}+O( X^{\epsilon} ).
\]
Next, pull the contour to the line $\tmop{Re}(s)=\tfrac12-\delta$ picking up the residue at $s=1$. 
To estimate the horizontal contours, apply \eqref{convex} and use the functional equation for $\zeta(s)$ along with Stirling's formula to see that they are
$\ll X^{\varepsilon}(1+X^{\frac12-\delta}X^{k(\lambda+\delta-2\lambda \delta)-1})$.
Thus,
\begin{equation} \notag 
\begin{split}
\Delta_k(x)=&\frac{1}{2\pi i } \int_{\frac12-\delta-iY}^{\frac12-\delta+iY}
\zeta^k(s) x^s \frac{ds}{s}+I_k(x;\theta, \delta) \\
&+O\Big(X^{\varepsilon}\Big(1+X^{\frac12-\delta}X^{k(\lambda+\delta-2\lambda \delta)-1}\Big)\Big).
\end{split}
\end{equation}

Now pull the first integral on the right-hand side to the line $\tmop{Re}(s)=-\epsilon$ and
note that the residue at $s=0$ contributes $O(1)$. 
Arguing as before, the
horizontal contours are $\ll X^{\varepsilon}(Y^{\frac{k}{2}-1}+X^{\frac12-\delta}Y^{k(\lambda+\delta-2\lambda \delta)-1})$.
Finally, applying the functional equation  we see that
\[
\begin{split}
\Delta_k(x)=&\frac{1}{2\pi i}\int_{-\epsilon-iY}^{-\epsilon+iY}
\chi^k(s) \zeta^k(1-s) x^s \, \frac{ds}{s}+I_{k}(x;\theta, \delta)\\
&+O\Big(X^{\varepsilon} \Big(Y^{\frac{k}{2}-1}+X^{\frac12-\delta}
(X+Y)^{k(\lambda+\delta-2\lambda \delta)-1}\Big)\Big).
\end{split}
\]
To complete the proof apply Lemma \ref{phase}.
\end{proof}
We now show that the mean square of $I_k(x;\theta,\delta)$ can be estimated in terms of the $2k$th moment
of the Riemann zeta-function. This is essentially Plancherel's theorem and we will give a direct proof.

\begin{lemma} \label{plancherel}
Let $w$ be a smooth function that is compactly supported in the positive real numbers.
Suppose that $0\le \delta < \frac{1}{2k}$ and for every $\epsilon>0$ that
\begin{equation} \label{int asump}
\int_0^T |\zeta(\tfrac12+\delta+it)|^{2k} \, dt \ll T^{1+\epsilon}.
\end{equation}
Then we have for any $\varepsilon>0$ that
\[ 
\frac{1}{X}\int_{\mathbb R} \Big|I_{k}(x;\theta, \delta)\Big|^2 w\Big( \frac{x}{X}\Big) \, dx \ll X^{1+2\delta \theta-\frac{(1+\theta)}{k}
+\varepsilon}.
\]
\end{lemma}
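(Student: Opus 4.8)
The plan is to give the direct proof of the Plancherel-type identity suggested in the statement. First I would parametrize the contour by $s=\tfrac12-\delta+it$, so that $ds=i\,dt$ and $x^s=x^{\frac12-\delta}x^{it}$, which gives
\[
I_k(x;\theta,\delta)=x^{\frac12-\delta}\,\tmop{Re}\,J(x),\qquad J(x)=\frac1\pi\int_Y^X \zeta^k(\tfrac12-\delta+it)\,\frac{x^{it}}{\tfrac12-\delta+it}\,dt.
\]
Since $I_k$ is real-valued, $|I_k|^2=\tfrac14 x^{1-2\delta}\big(J(x)^2+2|J(x)|^2+\overline{J(x)}^2\big)$, and I would integrate each of the three pieces against $w(x/X)$.

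The two \emph{off-diagonal} pieces coming from $J^2$ and $\overline{J}^2$ are negligible. Expanding the square and interchanging the order of integration, they produce the oscillatory $x$-integral $\int x^{1-2\delta}x^{\pm i(t_1+t_2)}w(x/X)\,dx$, whose phase derivative is $\asymp (t_1+t_2)/x$ and never vanishes because $t_1,t_2\ge Y=2\pi X^{\frac{1+\theta}{k}}$. Since $w$ is smooth and compactly supported away from the origin, repeated integration by parts in $x$ gains a factor $\ll 1/(t_1+t_2)\le X^{-\frac{1+\theta}{k}}$ at each step, so after enough iterations these pieces contribute $O(X^{-A})$ for every $A>0$.

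For the main term I would carry out the $x$-integral with $x=X\xi$ to obtain $\int x^{1-2\delta}x^{i(t_1-t_2)}w(x/X)\,dx=X^{2-2\delta}X^{i(t_1-t_2)}W(t_1-t_2)$, where $W(\tau)=\int_0^\infty \xi^{1-2\delta+i\tau}w(\xi)\,d\xi$ is, up to normalization, the Mellin transform of $w$; integration by parts in $\xi$ gives the rapid decay $W(\tau)\ll_A (1+|\tau|)^{-A}$ for every $A>0$. Using $2ab\le a^2+b^2$ to separate the two zeta factors, the estimate $|\tfrac12-\delta+it_j|\asymp |t_j|$, and the fact that $|t_1|\asymp|t_2|$ on the effective support of $W(t_1-t_2)$, the $t_2$-integral against $W$ contributes $O(1/|t_1|)$ and the whole expression collapses to
\[
\frac1X\int_{\mathbb R}|I_k(x;\theta,\delta)|^2 w\Big(\frac xX\Big)\,dx\ll X^{1-2\delta}\int_Y^X |\zeta(\tfrac12-\delta+it)|^{2k}\,\frac{dt}{t^2}+O(X^{-A}).
\]

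The remaining and decisive step is to bring in the moment hypothesis \eqref{int asump}, which is stated to the right of the critical line while our integrand sits to the left. I would apply the functional equation $\zeta(\tfrac12-\delta+it)=\chi(\tfrac12-\delta+it)\,\zeta(\tfrac12+\delta-it)$ together with Stirling's formula, which gives $|\chi(\tfrac12-\delta+it)|\asymp (t/2\pi)^{\delta}$ and hence $|\zeta(\tfrac12-\delta+it)|^{2k}\asymp t^{2k\delta}|\zeta(\tfrac12+\delta-it)|^{2k}$. A dyadic decomposition of $[Y,X]$ then yields, via \eqref{int asump}, the block bound $\int_T^{2T} t^{2k\delta-2}|\zeta(\tfrac12+\delta-it)|^{2k}\,dt\ll T^{2k\delta-1+\varepsilon}$. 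This is exactly where the assumption $\delta<\tfrac1{2k}$ is used: the exponent $2k\delta-1$ is negative, so the geometric sum over dyadic blocks is governed by its lower endpoint $T=Y$, giving $\int_Y^X|\zeta(\tfrac12-\delta+it)|^{2k}\,t^{-2}\,dt\ll Y^{2k\delta-1+\varepsilon}$. Since $Y\asymp X^{\frac{1+\theta}{k}}$ we have $Y^{2k\delta-1}=X^{2\delta(1+\theta)-\frac{1+\theta}{k}}$, and multiplying by $X^{1-2\delta}$ produces precisely $X^{1+2\delta\theta-\frac{1+\theta}{k}+\varepsilon}$. The main obstacle is the bookkeeping in localizing the double integral to the near-diagonal through the decay of $W$ and then routing the moment through the functional equation; the clean power saving hinges entirely on $\delta<\tfrac1{2k}$ forcing the lower endpoint $T=Y$ to dominate the dyadic sum.
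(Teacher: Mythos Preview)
Your proof is correct and follows essentially the same Plancherel-type argument as the paper: expand the square, interchange integrals, use the rapid decay of the Mellin transform of $w$ to localize to the near-diagonal, apply the functional equation to flip to $\tmop{Re}(s)=\tfrac12+\delta$, and invoke the moment hypothesis. The only cosmetic differences are that the paper bounds $(\tmop{Re}J)^2\le |J|^2$ at the outset (sidestepping your separate treatment of $J^2$ and $\overline{J}^2$), and it introduces an explicit near-diagonal cutoff $|t-v|\le U=X^{\eta}$ (losing a harmless $X^{\eta}$ absorbed into $\varepsilon$) rather than your slightly cleaner dyadic sum exploiting the full decay of $W$.
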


\begin{proof}
Changing the order of integration and making a change of variables we get that
\begin{equation} \notag
\begin{split} 
&\frac{1}{X}\int_{\mathbb R} \Big|I_{k}(x;\theta_k, \delta)\Big|^2 w\Big( \frac{x}{X}\Big) \, dx \\
&\qquad \qquad \qquad 
\le
\frac{X^{1-2\delta}}{\pi^2}\int_Y^X \int_Y^X \frac{\zeta^k(\tfrac12-\delta+it)\zeta^k(\tfrac12-\delta-iv)}{(\tfrac12-\delta+it)(\tfrac12-\delta-iv)}
X^{i(t-v)} \mathcal I(t-v) \, dv \, dt,
\end{split}
\end{equation}
where $\mathcal I(y):=\int_{\mathbb R} u^{1-2\delta+iy} w(u) \, du$. Observe that by repeatedly
integrating by parts $\mathcal I(y) \ll_{w,A} \min(1,|y|^{-A})$. Hence,
by this, Lemma \ref{convex}, and the functional equation for $\zeta(s)$  along with Stirling's formula we get
for $U=X^{\eta}$ with $0< \eta  \le \frac1k$ fixed that
the portion of the above integral on the right-hand side with $|t-v| \ge U$ is 
$\ll U^{-A} X^{3+k/3}$
for any $A \ge 1$. Thus, taking $A=(4+k/3)/\eta$ the portion
of the above integral with $|t-v| \ge U$ is $\ll X^{-1}$. To bound the remaining portion of the integral apply the functional equation
to see that it is
\begin{equation} \notag
\begin{split} 
&\ll X^{1-2\delta} \iint_{|v-t| \le U} |\zeta^k(\tfrac12+\delta-it)\zeta^k(\tfrac12+\delta+iv)|
(tv)^{\delta k-1} dv \, dt\\
&\ll U X^{1-2\delta} \int_{Y-U}^{X+U} |\zeta(\tfrac12+\delta+it)|^{2k}\frac{dt}{t^{2-2\delta k}} \ll X^{1-2\delta+\eta} Y^{2\delta k-1+\epsilon},
\end{split}
\end{equation}
where in the last step we have used \eqref{int asump}.
\end{proof}

\begin{proof}[Proof of Propositions \ref{3prop} and \ref{main prop}]

To prove Proposition \ref{3prop} first apply
Lemmas \ref{contour} and \ref{plancherel}, where the smooth function $w$ is taken so that it majorizes the indicator function of the interval $[1,2]$. A result of Heath-Brown ~\cite{HB2} allows us to 
take $\delta=\frac{1}{12}$. Also, Weyl's bound gives $\lambda=\frac16$. 
It follows for $0<\theta\le \tfrac12$ that
\begin{equation} \label{d3 bound}
\frac1X
\int_X^{2X} \bigg( \Delta_3(x)-
\frac{x^{\frac{1}{3}}}{\pi \sqrt{3}}
\sum_{n \le \frac1x \cdot X^{1+\theta}}
\frac{d_3(n)}{n^{\frac23}}\cos\big(6\pi \sqrt[3]{nx}\big) \bigg)^2 \,
dx \ll X^{\frac23-\frac{\theta}{6}+\varepsilon}.
\end{equation}
If we assume the Lindel\"of hypothesis we may take $\delta=\lambda=0$. Arguing in the same way as before
we get
for $k \ge 3$ and $0< \theta \le \frac{1}{k-1}$ that
\begin{equation} \label{dk bound}
\frac1X
\int_X^{2X} \bigg( \Delta_k(x)-
\frac{x^{\frac{1}{2}-\frac{1}{2k}}}{\pi \sqrt{k}}
\sum_{n \le \frac1x \cdot X^{1+\theta}}
\frac{d_k(n)}{n^{\frac12+\frac{1}{2k}}}\cos\Big(2\pi k \sqrt[k]{nx}+\frac{k-3}{4}\pi\Big) \bigg)^2 \,
dx \ll X^{1-\frac{(1+\theta)}{k}+\varepsilon}.
\end{equation}

To complete the proof, we will now remove the dependence on the variable $x$ from the length of the sum.
Let $a_n=d_k(n) n^{-\frac12-\frac{1}{2k}}e\Big(\frac{k-3}{8}\Big)$, where $e(x)=e^{2\pi ix}$, and
integrate term-by-term to see
\[
 \int_X^{2X} \bigg|\sum_{\frac1x \cdot X^{1+\theta} \le n \le  X^{\theta}}
a_n e(k \sqrt[k]{nx})
\bigg|^2 \, dx
=\sum_{\frac12 \cdot X^{\theta} \le m,n \le  X^{\theta}} a_n\overline{a}_m \int_{Z}^{2X}
 e(k\sqrt[k]{x}(\sqrt[k]{n}-\sqrt[k]{m})) \, dx,
\]
where  $Z=\max(m^{-1},n^{-1}) \cdot X^{1+\theta}$.
The diagonal terms with $m=n$ are $\ll X^{1-\frac{\theta}{k}+\varepsilon}$. To bound the off-diagonal terms
with $m \neq n$ 
we integrate by parts to see that the above integral is $\ll X^{1-\frac1k}/|\sqrt[k]{m}-\sqrt[k]{n}|$.
Also, for $m>n$ we use the bound $\sqrt[k]{m}-\sqrt[k]{n} \gg (m-n) m^{\frac1k-1}$. 
Hence, by symmetry, the off-diagonal terms are bounded by
\begin{equation} \notag
\begin{split}
\ll X^{1-\frac1k} \sum_{\substack{\frac12 \cdot X^{\theta}\le m,n \le X^{\theta} \\
m > n} } \frac{|a_m a_n|}{|\sqrt[k]{n}-\sqrt[k]{m}|} 
\ll& X^{1-\frac1k} \sum_{\substack{\frac12 \cdot X^{\theta}\le m,n \le X^{\theta} \\
m > n} } \frac{|a_m a_n| m^{1-\frac1k}}{|m-n|} \\
\ll& X^{1-\frac1k} X^{\theta \cdot \frac{(k-1)}{k}} \log X  \sum_{\frac12 \cdot X^{\theta}\le n \le X^{\theta}} |a_n|^2.
\end{split}
\end{equation}
Since $\theta \le \frac{1}{k-1}$ this is $\ll X^{1-\frac1k+\theta \cdot \frac{(k-1)}{k}-\frac{\theta}{k}+\varepsilon} 
\ll X^{1-\frac{\theta}{k}+\varepsilon}$. Thus, Proposition \ref{3prop} follows from this and \eqref{d3 bound}. Proposition \ref{main prop} follows from this and \eqref{dk bound}.
\end{proof}

\section{The proofs of Theorems \ref{3thm}, \ref{main thm} and \ref{notsoshort}}

\begin{lemma} \label{MVT}
Suppose $0 \le \alpha \le 1$ and write $e(x)=e^{2\pi i x}$.
For any complex numbers $a_n$ we have
\begin{equation} \notag
\begin{split}
& \int_{X}^{2X} x^{\alpha} \bigg|\sum_{1 \le n \le N} a_n e(k \sqrt[k]{nx}) \bigg|^2
 \, dx \\
& \qquad \qquad \qquad \qquad = \sum_{1 \le n \le N} |a_n|^2 \cdot
\Big(\frac{2^{1+\alpha}-1}{1+\alpha} X^{1+\alpha}+O\Big(X^{1+\alpha-\frac1k} N^{1-\frac1k} \log N \Big)\Big) .
\end{split}
\end{equation}
\end{lemma}

\begin{proof} 
Integrating term-by-term we see that the diagonal terms give the main term. To bound the off-diagonal terms
we argue as in the previous proof. Integrate by parts and then use the estimate $|\sqrt[k]{m}-\sqrt[k]{n}| \gg |m-n| (\max(m,n))^{\frac1k-1}$ to get that
\begin{equation} \notag
\begin{split}
 \sum_{ \substack{1 \le m,n \le N \\ m >n}} |a_m \overline{a_n}| \bigg| \int_{X}^{2X} x^{\alpha} e(k \sqrt[k]{nx}) 
 \, dx \bigg|
\ll& X^{1+\alpha-\frac1k} \sum_{\substack{1 \le m,n \le N \\
m > n} } \frac{|a_m a_n|}{|\sqrt[k]{n}-\sqrt[k]{m}|} \\
\ll& X^{1+\alpha-\frac1k} \sum_{ \substack{ 1 \le m,n \le N \\
m > n} } \frac{|a_m a_n| m^{1-\frac1k}}{|m-n|} \\
\ll& X^{1+\alpha-\frac1k} N^{1-\frac1k} \log N  \sum_{1 \le n \le N} |a_n|^2.
\end{split}
\end{equation}
\end{proof}

Write
\[
\mathcal M_k(N,L)= X^{1-\frac1k} \cdot \frac{2\,(2^{2-\frac1k}-1)}{ \pi^2 k(2-\frac{1}{k})}\sum_{n \le N} \frac{d_k^2(n)}{n^{1+\frac1k}}\sin^2\Big(\pi \frac{\sqrt[k]{n}}{L} \Big).
\]

\begin{lemma} \label{p est}
Let $\varepsilon_1>0$ and suppose that $0<\theta \le \frac{1}{k-1}-\varepsilon_1$ and $L \ge 2$.
Then there exists $\varepsilon_2>0$ such that 
\begin{equation} \notag
\begin{split}
\frac1X \int_X^{2X} 
\Big(P_k \Big(x+\frac{x^{1-\frac1k}}{L};\theta   \Big)-P_k(x;\theta)\Big)^2 \, dx
=&\mathcal M_k(X^{\theta},L)\Big(1+O(X^{-\varepsilon_2}) \Big)\\
&+O\Big( \frac{X^{1-\frac1k-\varepsilon_2}}{L}\Big).
\end{split}
\end{equation}
\end{lemma}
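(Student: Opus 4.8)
The plan is to reduce everything to the mean value estimate of Lemma \ref{MVT}. Writing $e(x)=e^{2\pi i x}$ and $a_n=d_k(n)n^{-\frac12-\frac{1}{2k}}e\big(\frac{k-3}{8}\big)$ as in the proof of Proposition \ref{main prop}, one has $P_k(x;\theta)=\tmop{Re}\,V(x)$ with $V(x)=\frac{x^{\frac12-\frac{1}{2k}}}{\pi\sqrt k}\sum_{n\le X^\theta}a_n e(k\sqrt[k]{nx})$. Setting $h=h(x)=x^{1-\frac1k}/L$ and $W(x)=V(x+h)-V(x)$, the integrand becomes $(\tmop{Re}\,W)^2=\frac12|W|^2+\frac12\tmop{Re}(W^2)$. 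I expect $\frac12\tmop{Re}\int_X^{2X}W^2\,dx$ to be negligible: after expanding, every resulting exponential has a phase of the shape $k\sqrt[k]{n\,\cdot}+k\sqrt[k]{m\,\cdot}$, whose derivative is $\gg X^{\frac1k-1}$ on $[X,2X]$ because $\sqrt[k]{n}+\sqrt[k]{m}\ge 2$; since there is no stationary point, repeated integration by parts (each application saving a factor $\ll X^{-\frac1k}$) renders this term smaller than any power of $X$. Thus the whole matter reduces to $\frac{1}{2X}\int_X^{2X}|W|^2\,dx$.

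The key step is to replace $W$ by the clean Dirichlet polynomial $\tilde W(x)=\frac{x^{\frac12-\frac1{2k}}}{\pi\sqrt k}\sum_{n\le X^\theta}c_n e(k\sqrt[k]{nx})$ with the $x$-independent coefficients $c_n=a_n\big(e(\sqrt[k]{n}/L)-1\big)$. Factoring $x^{\frac12-\frac1{2k}}e(k\sqrt[k]{nx})$ out of each term of $W$ gives $W(x)=\frac{x^{\frac12-\frac1{2k}}}{\pi\sqrt k}\sum_n a_n e(k\sqrt[k]{nx})\big[(1+\tfrac hx)^{\frac12-\frac1{2k}}e(\phi_n(x))-1\big]$, where $\phi_n(x)=k\sqrt[k]{n}\,\big((x+h)^{1/k}-x^{1/k}\big)$. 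Taylor expanding and inserting $h=x^{1-\frac1k}/L$ gives $\phi_n(x)=\frac{\sqrt[k]{n}}{L}\big(1+O(x^{-1/k}/L)\big)$ and $(1+\tfrac hx)^{\frac12-\frac1{2k}}=1+O(x^{-1/k}/L)$, so the coefficient discrepancy $R_n(x):=(1+\tfrac hx)^{\frac12-\frac1{2k}}e(\phi_n(x))-e(\sqrt[k]{n}/L)$ satisfies $R_n(x)\ll \frac{x^{-1/k}}{L}\big(1+\frac{\sqrt[k]{n}}{L}\big)$ and, since $R_n$ is smooth on scale $x$, also $R_n'(x)\ll \frac1x|R_n|_{\max}$. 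Applying Lemma \ref{MVT} with $\alpha=1-\frac1k$ to $\frac{1}{2X}\int_X^{2X}|\tilde W|^2\,dx$, and using $|c_n|^2=4\,d_k^2(n)n^{-1-\frac1k}\sin^2(\pi\sqrt[k]{n}/L)$ (from $|e(y)-1|^2=4\sin^2\pi y$), reproduces exactly the main term $\mathcal M_k(X^\theta,L)$, the factor $\tfrac12$ and the $4$ combining into the constant of $\mathcal M_k$. The off-diagonal error from Lemma \ref{MVT} is $\ll X^{-\frac1k+\theta(1-\frac1k)}(\log X)\,\mathcal M_k$, and the hypothesis $\theta\le\frac{1}{k-1}-\varepsilon_1$ forces the exponent $-\frac1k+\theta(1-\frac1k)\le -\varepsilon_1(1-\frac1k)<0$, yielding the factor $1+O(X^{-\varepsilon_2})$.

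It remains to bound $\frac1X\int_X^{2X}|W-\tilde W|^2\,dx=\frac{1}{\pi^2 k}\frac1X\int_X^{2X} x^{1-\frac1k}\big|\sum_n a_nR_n(x)e(k\sqrt[k]{nx})\big|^2\,dx$ together with the cross term. Because the coefficients $a_nR_n(x)$ now depend on $x$, I cannot cite Lemma \ref{MVT} directly; instead I would expand the square and treat the diagonal and off-diagonal exactly as in that lemma, the only new feature being that differentiating $R_n$ costs merely a factor $\frac1x$. The diagonal contributes $\ll \frac{X^{1-3/k}}{L^2}\sum_{n\le X^\theta}|a_n|^2\big(1+\frac{\sqrt[k]{n}}{L}\big)^2$, and using $\sum_{n\le N}d_k^2(n)n^{-1+1/k}\ll N^{1/k}(\log N)^{k^2-1}$ this is $\ll X^{-\varepsilon_2}\,\frac{X^{1-1/k}}{L}$; the off-diagonal, handled by one integration by parts as in Lemma \ref{MVT} via $|\sqrt[k]{m}-\sqrt[k]{n}|\gg|m-n|N^{1/k-1}$, is smaller still since $\theta<\frac{3}{k-1}$. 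The cross term $\int_X^{2X}\tilde W\,\overline{(W-\tilde W)}\,dx$ is then disposed of by Cauchy--Schwarz against the two mean squares. The main obstacle is precisely this treatment of $W-\tilde W$: because the interval length $x^{1-1/k}/L$ varies with $x$, both the phase $\phi_n(x)$ and the modulus factor carry genuine $x$-dependence, so one must verify by Taylor expansion that replacing them by the constants $\sqrt[k]{n}/L$ and $1$ incurs only an error with a power saving in $X$, and that saving is exactly what the range $\theta\le\frac{1}{k-1}-\varepsilon_1$ provides.
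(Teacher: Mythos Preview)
Your proposal is correct and follows the same overall strategy as the paper---approximate the phase increment $k(\sqrt[k]{n(x+h)}-\sqrt[k]{nx})$ by the constant $\sqrt[k]{n}/L$, apply Lemma~\ref{MVT} to the resulting sum with $x$-independent coefficients to produce $\mathcal M_k(X^\theta,L)$, and show the approximation error carries a power saving. The organizational difference is that the paper first replaces the shifted point $x+x^{1-1/k}/L$ by $(\sqrt[k]{x}+\tfrac{1}{kL})^k$: since $k\sqrt[k]{n}\big(\sqrt[k]{x}+\tfrac{1}{kL}\big)-k\sqrt[k]{nx}=\sqrt[k]{n}/L$ \emph{exactly}, no Taylor expansion is needed and Lemma~\ref{MVT} applies verbatim to the main piece $M(x)$. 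The remaining work in the paper is to bound $P_k\big((\sqrt[k]{x}+\tfrac{1}{kL})^k;\theta\big)-P_k\big(x+x^{1-1/k}/L;\theta\big)$ in mean square, which is done via the crude pointwise estimate $|\Sigma(x_1)-\Sigma(x_2)|\le |x_1-x_2|\sup|\Sigma'|$ together with $|x_1-x_2|\ll L^{-2}X^{-1/k}$, followed by one more appeal to Lemma~\ref{MVT}. Your direct Taylor expansion of $\phi_n(x)$ achieves the same end but leaves $x$-dependent coefficients $R_n(x)$ in the error $W-\tilde W$, so you must rerun the diagonal/off-diagonal argument of Lemma~\ref{MVT} by hand; the paper's substitution keeps every sum in a form to which Lemma~\ref{MVT} applies as stated. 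Both routes dispatch the oscillatory piece (your $\tmop{Re}(W^2)$, the paper's implicit $\tmop{Re}(\Sigma^2)$ hidden in ``it is not hard to see'') by integration by parts against the non-vanishing phase $\sqrt[k]{m}+\sqrt[k]{n}$; your repeated integration by parts is stronger than necessary but certainly valid.
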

\begin{proof}
Write
\[
M(x)=\frac{-2 \cdot x^{\frac12-\frac{1}{2k}}}{\pi \sqrt{k}} \sum_{n \le  X^{\theta}} \frac{d_k(n)}{n^{\frac12+\frac{1}{2k}}}
\sin\Big(\pi  \frac{\sqrt[k]{n}}{L} \Big)
\sin\Big(2\pi k \sqrt[k]{n}\Big(\sqrt[k]{x}+\frac{1}{2kL}\Big) +\frac{k-3}{4} \pi \Big).
\] 
It follows from some basic manipulations that
\begin{equation} \label{simp manip}
P_k\Big( \Big(\sqrt[k]{x}+\frac{1}{kL}\Big)^k;\theta   \Big)-P_k(x;\theta)
=M(x)+\mathcal R(x)
\end{equation}
where, for $X<x\le 2X$,
\[
\mathcal R(x)=
O\bigg(\frac{x^{\frac12-\frac{3}{2k}}}{L}\bigg|
\sum_{n \le  X^{\theta}} \frac{d_k(n)}{n^{\frac12+\frac{1}{2k}}}
\cos\Big(2\pi k \sqrt[k]{n}\Big(\sqrt[k]{x}+\frac{1}{kL} \Big)+\frac{k-3}{4} \pi \Big)\bigg|\bigg).
\]
Now write $a_n=d_k(n)n^{-\frac12-\frac{1}{2k}} \sin\Big(\pi  \frac{\sqrt[k]{n}}{L} \Big) e\Big(\frac{\sqrt[k]{n}}{2L}+ \frac{k-3}{8 }\Big)$, so that
\[
M(x)=
\frac{-x^{\frac{1}{2}-\frac{1}{2k}}}{ \pi i \sqrt{k}} \Bigg(
\sum_{n \le X^{\theta}} a_n e(k \sqrt[k]{nx})- \overline{\sum_{n \le X^{\theta}} a_n e(k \sqrt[k]{nx})}\Bigg).
\] 
Using Lemma \ref{MVT} it is not hard to see that
\begin{equation} \label{sig bd}
\frac1X \int_{X}^{2X}
M(x)^2 \, dx=\mathcal M_k(X^{\theta}, L) \cdot \Big(1+O(X^{\theta(1-\frac1k)-\frac1k} \log X)\Big) .
\end{equation}
Also, by Lemma \ref{MVT} it follows that 
\begin{equation} \label{r bd}
\frac1X \int_{ X}^{2X} 
\mathcal R(x)^2  \, dx \ll \frac{X^{1-\frac{3}{k}}}{L^2}
\sum_{n \le X^{\theta}} \frac{d_k^2(n)}{n^{1+\frac{1}{k}}} \ll \frac{X^{1-\frac{3}{k}}}{L^2}.
\end{equation}

Next, to shorten notation write
 \[
\Sigma(x)= \sum_{n \le X^{\theta}} \frac{d_k(n)}{n^{\frac12+\frac{1}{2k}}} \cos\Big(2\pi k x \sqrt[k]{n}+ \frac{k-3}{4} \cdot \pi \Big).
\]
 Also, let $x_1=\sqrt[k]{x}+\frac{1}{kL}$ and
$x_2=\Big( x+\frac{x^{1-\frac1k}}{L}\Big)^{\frac{1}{k}}$.
We have for $X \le x \le 2X$ that
\begin{equation} \notag
\begin{split}
|x_1^{\frac{k-1}{2}} \Sigma(x_1)-x_2^{\frac{k-1}{2}} \Sigma(x_2)| \ll & x_2^{\frac{k-1}{2}} |\Sigma(x_1)-\Sigma(x_2)|+|x_2^{\frac{k-1}{2}}-x_1^{\frac{k-1}{2}}| |\Sigma(x_1)| \\
\ll& |x_2-x_1| \cdot \Big(X^{\frac12-\frac{1}{2k}}\sum_{n \le X^{\theta}} \frac{d_k(n)}{n^{\frac12-\frac{1}{2k}}}+ X^{\frac{k-3}{2k}} |\Sigma(x_1)| \Big) \\
\ll & \frac{1}{L^2 X^{\frac1k}} \cdot \Big( X^{\frac12-\frac{1}{2k}+\theta(\frac12+\frac{1}{2k})+\frac{\varepsilon_1}{2k}}+ X^{\frac{k-3}{2k}} |\Sigma(x_1)| \Big).
\end{split}
\end{equation}
Thus, applying Lemma \ref{MVT} and using that $\theta \le \frac{1}{k-1}-\varepsilon_1$ we have
\[
\begin{split}
\frac1X \int_{X}^{2X} \Big(P_k\Big( \Big(\sqrt[k]{x}+\frac{1}{kL}\Big)^k; \theta \Big)-P_k\Big( x+\frac{x^{1-\frac1k}}{L}; \theta\Big)\Big)^2 \, dx 
 \ll&  \frac{X^{1-\frac{3}{k}+\theta(1+\frac{1}{k})+\frac{\varepsilon_1}{k}}}{L^4}+\frac{X^{1-\frac5k} }{L^4} \\
\ll& \frac{X^{1-\frac{1}{k}
+\frac{3-k}{k(k-1)}-\varepsilon_1}}{L^4}+
\frac{X^{1-\frac5k}}{L^4}.
\end{split}
\]
Combining this with \eqref{simp manip}, \eqref{sig bd}, and \eqref{r bd}
and then applying Cauchy-Schwarz we complete the proof.
\end{proof}

\begin{lemma} \label{partial summation}
Let $\theta>0$ and $k \ge 2$.
Suppose that $2 \le L=o(X^{\theta/k})$ as $X \rightarrow \infty$.
We have
\[
\sum_{n \le X^{\theta}} \frac{d_k^2(n)}{n^{1+\frac1k}}\sin^2\Big(\pi  \frac{\sqrt[k]{n}}{L} \Big)
=\frac{k^{k^2} \pi^2}{2 \, \Gamma(k^2)} \, a_k \cdot \frac{(\log L)^{k^2-1}}{L}+O\bigg(\frac{(\log L)^{k^2-2}}{L}+\frac{(\log L)^{k^2-1}}{X^{\theta/k}} \bigg),
\]
where $a_k$ is as given in \eqref{constants}.
\end{lemma}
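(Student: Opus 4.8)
The plan is to reduce the weighted sum to a Stieltjes integral by partial summation, insert the classical asymptotic for $\sum_{n \le t} d_k(n)^2$, and then evaluate the resulting integral by two changes of variable. The key arithmetic input is the factorization $\sum_{n} d_k(n)^2 n^{-s} = \zeta(s)^{k^2} U(s)$, where $U(s) = \prod_p (1-p^{-s})^{k^2}\sum_{j\ge 0} d_k(p^j)^2 p^{-js}$; since $d_k(p)^2 = k^2$, the coefficient of $p^{-s}$ in each local factor cancels, so $U(s)$ converges and is non-vanishing for $\mathrm{Re}(s) > \tfrac12$, with $U(1) = a_k$. By the Selberg--Delange method (or Perron's formula with the standard zero-free region) this yields $A(t) := \sum_{n \le t} d_k(n)^2 = t\,Q(\log t) + \Delta_A(t)$, where $Q$ is a polynomial of degree $k^2-1$ with leading coefficient $a_k/\Gamma(k^2)$ and $\Delta_A(t) \ll t^{1-\eta}$ for some fixed $\eta>0$.

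Writing $\phi(t) = t^{-1-1/k}\sin^2(\pi t^{1/k}/L)$, partial summation gives $S = \int_{1^-}^{X^\theta} \phi(t)\, dA(t)$, and I would split $A = M + \Delta_A$ with $M(t) = t\,Q(\log t)$. To control the $\Delta_A$ term I integrate by parts and estimate $\int_1^{X^\theta} |\Delta_A(t)|\,|\phi'(t)|\, dt$ together with negligible boundary terms. Here the crucial observation is that $\phi'$ obeys two different bounds: for $t \le L^k$ the factor $\sin^2(\pi t^{1/k}/L) \asymp t^{2/k}/L^2$ is small, giving $|\phi'(t)| \ll t^{1/k-2}/L^2$, while for $t \ge L^k$ one has $|\phi'(t)| \ll t^{-2}/L + t^{-2-1/k}$. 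Feeding in $\Delta_A(t) \ll t^{1-\eta}$, both ranges contribute $\ll L^{-1-k\eta}$, which is absorbed into the error term $(\log L)^{k^2-2}/L$; note that even a zero-free-region saving would suffice because the division by $L$ already supplies the needed $1/L$.

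For the main term $\int_1^{X^\theta}\phi(t) M'(t)\, dt$, only the leading part $M'(t) = \frac{a_k}{\Gamma(k^2)}(\log t)^{k^2-1} + O((\log t)^{k^2-2})$ matters, the remainder contributing $\ll (\log L)^{k^2-2}/L$. I would then evaluate $\frac{a_k}{\Gamma(k^2)}\int_1^{X^\theta} t^{-1-1/k}\sin^2(\pi t^{1/k}/L)(\log t)^{k^2-1}\,dt$ by substituting $u = t^{1/k}$, which produces the factor $k^{k^2}$ (from $(\log t)^{k^2-1} = k^{k^2-1}(\log u)^{k^2-1}$ and the Jacobian, since $t^{-1-1/k}\cdot ku^{k-1}\,du = k\,u^{-2}\,du$), and then $v = u/L$, arriving at $\frac{a_k k^{k^2}}{\Gamma(k^2) L}\int_{1/L}^{X^{\theta/k}/L} \frac{\sin^2(\pi v)}{v^2}(\log L + \log v)^{k^2-1}\, dv$. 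Expanding $(\log L + \log v)^{k^2-1}$ by the binomial theorem, the top term combined with $\int_0^\infty \frac{\sin^2(\pi v)}{v^2}\, dv = \frac{\pi^2}{2}$ produces exactly the claimed main term $\frac{k^{k^2}\pi^2}{2\Gamma(k^2)}\,a_k\cdot\frac{(\log L)^{k^2-1}}{L}$, while the lower powers of $\log v$ contribute $\ll (\log L)^{k^2-2}/L$.

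Finally I would account for replacing the finite integral by $\int_0^\infty$: extending past the lower limit $v=1/L$ costs $\ll (\log L)^{k^2-1}/L^2$ (negligible against $(\log L)^{k^2-2}/L$), while the upper tail $v \ge X^{\theta/k}/L$ — which corresponds precisely to the truncation $n \le X^\theta$ of the original sum — costs $\ll (\log L)^{k^2-1}/X^{\theta/k}$, giving the second error term; the hypothesis $L = o(X^{\theta/k})$ is exactly what renders it smaller than the main term. The main obstacle is the first step: securing the asymptotic for $\sum_{n \le t} d_k(n)^2$ with its precise leading constant $a_k/\Gamma(k^2)$ and an error term strong enough (a power saving is more than enough) to survive partial summation. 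The two-regime analysis of $\phi'$, which exploits the smallness of $\sin^2(\pi t^{1/k}/L)$ for $t \le L^k$, is precisely what allows even a modest saving in $\Delta_A(t)$ to carry the argument through.
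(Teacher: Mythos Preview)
Your proposal is correct and follows essentially the same route as the paper: both arguments input the asymptotic $\sum_{n\le t}d_k(n)^2 = t\,Q_{k^2-1}(\log t)+O(t^{1-\eta})$ (with leading coefficient $a_k/\Gamma(k^2)$) via the factorization $\sum d_k(n)^2 n^{-s}=\zeta(s)^{k^2}g(s)$, apply partial summation against $\phi(t)=t^{-1-1/k}\sin^2(\pi t^{1/k}/L)$, and evaluate the resulting integral by the substitutions $t\mapsto u^k$ and $u\mapsto Lv$, extracting the main term from $\int_0^\infty \sin^2(\pi v)\,v^{-2}\,dv=\pi^2/2$. Your two-regime treatment of $\phi'$ to bound the $\Delta_A$ contribution by $L^{-1-k\eta}$ is more explicit than the paper, which simply records the combined error as $O(1/L)$, but the underlying mechanism is the same.
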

\begin{proof}
We first require an estimate for the summatory function of $d_k(n)^2$, which follows from a standard argument that we will briefly sketch. Start with the generating series
\[
G(s) =\sum_{n =1}^{\infty} \frac{d_k(n)^2}{n^s}
=\prod_p\Big(\sum_{j=0}^{\infty} \frac{d_k(p^j)^2}{p^{js}} \Big)=\prod_p
\Big(\sum_{j=0}^{\infty}\Big(\frac{\Gamma(k+j)}{\Gamma(k)j!}\Big)^2\frac{1}{p^{js}} \Big)=
\zeta^{k^2} (s) g(s).
\]
Here the function $g(s)$ is analytic for $\tmop{Re}(s)>\tfrac12$
and is given by
\[
g(s)=
 \prod_p
\Big( (1-p^{-s})^{k^2} \sum_{j=0}^{\infty}\Big(\frac{\Gamma(k+j)}{\Gamma(k)j!}\Big)^2\frac{1}{p^{js}} \Big).
\]
Also, note that $g(s)$ is bounded for $\tmop{Re}(s)>\frac12+\varepsilon$ (see pages 173-174 of Titchmarsh ~\cite{Titchmarsh}).
 Applying Perron's formula, shifting contours of integration, and using Theorem 7.7 of Titchmarsh \cite{Titchmarsh} one can show that
\begin{equation} \label{summatory}
\sum_{n \le N} d_k(n)^2=N \, Q_{k^2-1}(\log N)+O(N^{1-\frac{1}{k^2}+\varepsilon}),
\end{equation}
where $Q_{k^2-1}(x)=\sum_{j=0}^{k^2-1} b_j x^j$
and
$
b_{k^2-1}=a_k/\Gamma(k^2)
$, where $a_k$ is the arithmetic factor in \eqref{constants}.

Using \eqref{summatory} we have that
\begin{equation} \notag
\begin{split}
\sum_{n \le  X^{\theta}} \frac{d_k^2(n)}{n^{1+\frac1k}}\sin^2\Big(\pi  \frac{\sqrt[k]{n}}{L} \Big)  
=\int_1^{X^{\theta}}
\frac{Q_{k^2-1}(\log x)+Q_{k^2-1}'(\log x)}{x^{1+\frac1k}} \sin^2\Big(\pi  \frac{\sqrt[k]{x}}{L} \Big) \, dx
+O\Big(\frac{1}{L} \Big).
\end{split}
\end{equation}
Make the change of variables $u= \sqrt[k]{x}/L$ and assume that $X^{\theta/k}/L
\rightarrow \infty$. The integral on the right-hand side equals
\begin{equation} \notag
\begin{split}
&\frac{k}{L} \int_{1/L}^{X^{\theta/k}/L} \Big( Q_{k^2-1}(k \log (Lu)))+Q_{k^2-1}'(k \log (Lu))\Big)
\frac{\sin^2(\pi  u )}{u^{2}}  \, du\\
& \qquad  =
\frac{b_{k^2-1}  \cdot k^{k^2} (\log L)^{k^2-1}}{L } \int_{1/L}^{X^{\theta/k}/L} \frac{\sin^2(\pi  u )}{u^{2}}  \, du
+O\bigg(\frac{(\log L)^{k^2-2}}{L}\bigg) \\
& \qquad  =
\frac{b_{k^2-1} \cdot k^{k^2} (\log L)^{k^2-1}}{L } \int_{0}^{\infty} \frac{\sin^2(\pi  u )}{u^{2}}  \, du
+O\bigg(\frac{(\log L)^{k^2-2}}{L}+\frac{(\log L)^{k^2-1}}{X^{\theta/k}} \bigg).
\end{split}
\end{equation}
Note that $\int_0^{\infty} \frac{\sin^2 (\pi u)}{u^2} \, du=\frac{ \pi^2}{2}.$ 
\end{proof}

\begin{proof}[Proof of Theorems \ref{3thm} and \ref{main thm}]
Let 
\[S_k(x)=\Delta_k\Big( x+\frac{x^{1-\frac1k}}{L}\Big)-\Delta_k(x) \quad
\mbox{ and }  \quad
\mathcal P_k(x)=P_k\Big( x+\frac{x^{1-\frac1k}}{L};\theta   \Big)-P_k(x;\theta).
\]
From Lemmas \ref{p est} and \ref{partial summation} we deduce that for $0<\theta \le \frac{1}{k-1}-\varepsilon$ 
and $L =o(X^{\theta/k})$ that
\begin{equation} \label{mean sq}
\frac{1}{X} \int_{X}^{2X} \mathcal P_k(x;\theta)^2 \, dx
=C_k \frac{X^{1-\frac1k}}{L} (\log L)^{k^2-1}\Big(1+O\Big(\frac{1}{\log L} \Big)\Big)+O\Big(X^{1-\frac{(1+\theta)}{k}+\varepsilon} \Big),
\end{equation}
where $C_k$ is as given in \eqref{constants}.
Proposition \ref{3prop} states for $0<\theta \le \tfrac12-\varepsilon$ that
\[
\frac{1}{X} \int_{X}^{2X}
\Big(S_3(x)-\mathcal P_3(x;\theta) \Big)^2  \, dx
\ll X^{\frac23-\frac{\theta}{6}+\varepsilon}.
\]
If $\theta =  \frac12-\varepsilon$ and $L \ll X^{\frac{1}{12}-2\varepsilon}$ this is smaller than \eqref{mean sq}, with $k=3$,
with a power savings in $X$.
Now apply Cauchy-Schwarz to see for $L \ll X^{\frac{1}{12}-2\varepsilon}$ that
\[
\frac{1}{X} \int_{X}^{2X} S_3(x)^2 \, dx
=C_3 \frac{X^{2/3}}{L} (\log L)^{k^2-1}+O\bigg(\frac{X^{2/3}}{L} (\log L)^{k^2-2}\bigg).
\]
This proves Theorem \ref{3thm}. Theorem \ref{main thm} follows from a similar argument, except that here we use Proposition \ref{main prop} with $\theta=\frac{1}{k-1}-\varepsilon$ in place of Proposition \ref{3prop}, so that we may take $2 \le L \ll X^{\frac{1}{k(k-1)}-2\varepsilon}$.
\end{proof}

\begin{proof}[Proof of Theorem \ref{notsoshort}]
Write
$a_n=d_k(n) n^{-\frac12-\frac{1}{2k}}e\Big(\frac{k-3}{8}\Big)$
so that
\[
P_k(x;\theta)=\frac{x^{\frac{1}{2}-\frac{1}{2k}}}{2\pi \sqrt{k}} \bigg(
\sum_{n \le  X^{\theta}} a_n e(k\sqrt[k]{nx})+\overline{\sum_{n \le  X^{\theta}} a_n e(k\sqrt[k]{nx})}\bigg).
\]
Applying Lemma \ref{MVT} it is not difficult to
see
that for $\varepsilon \le \theta \le \frac{1}{k-1}-\varepsilon$ that as $X \rightarrow \infty$
\begin{equation} \label{2vars}
\begin{split}
\frac{1}{X}\int_{X}^{2X} \Big(P_k(x;\theta) \Big)^2  dx
\sim \frac{ B_k}{2}  \cdot X^{1-\frac1k}.
\end{split}
\end{equation}
Next note that $(X-H)^{1-\frac1k} \sim X^{1-\frac1k}$ for $H=o(X)$ as $X \rightarrow \infty$. Using this estimate, making
the change of variables $u=x+H$ and applying Lemma \ref{MVT} one has for $\varepsilon \le \theta \le \frac{1}{k-1}-\varepsilon$ that as $X \rightarrow \infty$
\begin{equation} \label{3vars}
\frac{1}{X}\int_{X}^{2X} \Big(P_k(x+H;\theta) \Big)^2  dx\sim \frac{ B_k}{2}  \cdot X^{1-\frac1k}.
\end{equation}

We next estimate the covariance term.
Let
\[
I=\frac{1}{X}
\int_{X}^{2X} 
\frac{x^{1-\frac{1}{k}}}{4\pi^2 k}
\sum_{m,n \le  X^{\theta}} a_m \overline{a_n} e(k(\sqrt[k]{m(x+H)} - \sqrt[k]{nx}) \, dx
\]
and
\[
J=\frac{1}{X}
\int_{X}^{2X} 
\frac{x^{1-\frac{1}{k}}}{4\pi^2 k}
\sum_{m,n \le  X^{\theta}} a_m a_n e(k(\sqrt[k]{m(x+H)} + \sqrt[k]{nx}) \, dx.
\]
It follows that
\begin{equation}\label{covarsetup}
\frac1X
\int_{X}^{2X} P_k(x+H;\theta) P_k(x;\theta)  \, dx=2 \tmop{Re}\big(I+J\big).
\end{equation}
We will now bound $I$ and assume that $H=o(X^{1-\theta})$. Note that for real numbers $x,y>0$ we have $|\sqrt[k]{x}-\sqrt[k]{y}| \gg |x-y| (\max(x,y))^{\frac1k-1}$.  So for $m,n \le X^{\theta}$ with $m \ne n$ we have uniformly for $X \le x \le 2X$ that
\[
\bigg|x^{\frac1k-1}\Big(\sqrt[k]{m}\Big(1+\frac{H}{x}\Big)^{\frac1k-1}-\sqrt[k]{n}\Big) \bigg| \gg X^{\frac1k-1} |m-n+o(1)| (\max(m,n))^{\frac1k-1}>0.
\]
Using this bound along with Lemma 4.3 of Titchmarsh \cite{Titchmarsh}, or alternatively integrating by parts, we have
\[
\frac{1}{X} \bigg|
\int_{X}^{2X} x^{1-\frac1k} \, e(k(\sqrt[k]{m(x+H)} - \sqrt[k]{nx}) \, dx \bigg| \ll
\begin{cases}
 \displaystyle \frac{X^{1-\frac2k} (\max(m,n))^{1-\frac1k}}{|m-n|} \quad \mbox{    if } m \ne n, \\
\displaystyle \frac{X^{2-\frac2k}}{H \sqrt[k]{n}} \qquad \mbox{   if } m = n.
\end{cases}
\]
Thus, for $\theta< \frac{1}{k-1}-3\varepsilon$ the contribution of the terms with $m \ne n$ to $I$
is 
\begin{equation} \notag
\begin{split}
\ll X^{1-\frac{2}{k}} \sum_{\substack{m,n \le X^{\theta} \\ m \ne n}} \frac{|a_m a_n| (\max(m,n))^{1-\frac1k}}{|m - n|} 
\ll X^{1-\frac{2}{k}+\theta(1-\frac1k)+\varepsilon} \sum_{n \le X^{\theta}} |a_n|^2 
 \ll  X^{1-\frac1k-\varepsilon}.
\end{split}
\end{equation}
The terms with $m=n$ contribute
\[
\ll \frac{X^{2-\frac2k}}{H} \sum_{n \le X^{\theta}} \frac{|a_n|^2}{\sqrt[k]{n}}
\ll  \frac{X^{2-\frac2k}}{H}.
\]
It follows for $\theta<\frac{1}{k}-\varepsilon$ and $X^{1-\frac1k+\varepsilon} \ll H=o(X^{1-\theta})$ that $I =O(X^{1-\frac1k-\varepsilon})$. The proof of the analogous bound for $J$ follows from a similar, but easier argument that we will omit.
Using these bounds in \eqref{covarsetup} we get for $\theta<\frac{1}{k}-\varepsilon$ and $X^{1-\frac1k+\varepsilon} \ll H=o(X^{1-\theta})$ that
\[
\frac1X
\int_{X}^{2X}P_k(x+H;\theta)  P_k(x;\theta) \, dx =O(X^{1-\frac1k-\varepsilon}).
\]
Therefore, combining this with \eqref{2vars} and \eqref{3vars} we obtain for $\theta<\frac{1}{k}-\varepsilon$ and $X^{1-\frac1k+\varepsilon} \ll H=o(X^{1-\theta})$ that as $X \rightarrow \infty$
\begin{equation} \label{polydone}
\frac{1}{X}\int_{X}^{2X} \Big(P_k\Big(x+H; \theta\Big)-P_k(x; \theta)\Big)^2 \, dx \sim B_k  \cdot X^{1-\frac1k}.
\end{equation}

To complete the proof for $k=3$ we use \eqref{polydone}, Proposition \ref{3prop} with $\theta=12\varepsilon$, and then Cauchy-Schwarz.
For $k \ge 4$  one argues in the same way only now use Proposition \ref{main prop} with $\theta=2 k \varepsilon$ in place of Proposition \ref{3prop}. For $k=2$ we use a classical estimate of Titchmarsh (see \cite{Titchmarsh} (12.4.4)) which unconditionally implies \eqref{varbdbest} in the case $k=2$, so that this case now follows as well.
\end{proof}

\noindent
\textbf{Acknowledgments.}  I would like to thank Zeev Rudnick for his encouragement and for many helpful and interesting discussions.

\end{document}